\documentclass[oneside,a4paper,reqno]{amsart}
\usepackage{pdfsync, enumitem}
\usepackage{amsmath,bm}
\usepackage{mathtools}
\usepackage{graphicx}
\usepackage[all,cmtip]{xy}
\usepackage{tikz-cd}
\usepackage{yfonts}

\newtheorem{introtheorem}{Theorem}

\usepackage[T1]{fontenc}
\usepackage[utf8]{inputenc}

\usepackage{etoolbox}
\patchcmd{\subsection}{\bfseries}{\itshape}{}{}

\usepackage{stmaryrd}
\usepackage{mathrsfs}
\usepackage{hyperref}
\usepackage{tikz-cd}
\usepackage{amssymb}
\usepackage{stackrel}
\usepackage{adjustbox}
\usepackage{multicol}
\usepackage{amsmath, amsthm, amscd, amssymb, latexsym, eucal}
\usepackage[all]{xy}
\def\serieslogo@{} \def\@setcopyright{} \makeatother

\usepackage[colorinlistoftodos]{todonotes}

\usepackage{hyperref}
\usepackage{color}
\usepackage{cite}
\usepackage{quiver}
\usepackage{marvosym}

\makeatletter
\renewcommand*\env@matrix[1][c]{\hskip -\arraycolsep
	\let\@ifnextchar\new@ifnextchar
	\array{*\c@MaxMatrixCols #1}}
\makeatother

\usepackage{color}

\numberwithin{equation}{section}
\newtheorem{thm}{Theorem}[section]

\newtheorem*{Auslander-thm}{Auslander's Theorem}

\newtheorem{cor}[thm]{Corollary}
\newtheorem{lem}[thm]{Lemma}
\newtheorem{prop}[thm]{Proposition}

\newtheorem*{corA}{Corollary}

\theoremstyle{definition}

\newtheorem{rem}[thm]{Remark}

\newtheorem{reminder}[thm]{Reminder}
\newtheorem{exmp}[thm]{Example}

\newcommand\padova{%
\mathrel{\ooalign{\hss{\scalebox{0.5}{$\longleftrightarrow$}}\hss\cr%
\kern0.9ex\raise0.55ex\hbox{\scalebox{0.5}{$\boldsymbol{\bm{\vert}}$}}}}}
\newcommand\hfpadova{%
\mathrel{\ooalign{\hss{\scalebox{0.5}{$\longleftrightarrow$}}\hss\cr%
\kern0.9ex\raise0.55ex\hbox{\scalebox{0.5}{$\boldsymbol{\mathsf{h}\mathsf{f}\ \ }$}}}}}
\newcommand\rpadova{%
\mathrel{\ooalign{\hss{\scalebox{0.5}{$\longrightarrow$}}\hss\cr%
\kern0.9ex\raise0.55ex\hbox{\scalebox{0.5}{$\boldsymbol{\bm{\vert}}$}}}}}
\newcommand\lpadova{%
\mathrel{\ooalign{\hss{\scalebox{0.5}{$\longleftarrow$}}\hss\cr%
\kern0.9ex\raise0.55ex\hbox{\scalebox{0.5}{$\boldsymbol{\bm{\vert}}$}}}}}

\newcommand{\X}{\mathcal X}

\DeclareMathOperator*{\opp}{\mathsf{op}}

\DeclareMathOperator{\pd}{\mathsf{pdim}}

\DeclareMathOperator*{\Findim}{\mathsf{Fin.\!dim}}

\DeclareMathOperator*{\Mod}{\mathsf{Mod}\!}

\DeclareMathOperator*{\smod}{\mathsf{mod}\!}

\DeclareMathOperator*{\umod}{\underline{\mathsf{mod}}\!}

\DeclareMathOperator*{\proj}{\mathsf{proj}\!}

\DeclareMathOperator*{\Inj}{\mathsf{Inj}\!}

\DeclareMathOperator*{\Proj}{\mathsf{Proj}\!}

\usepackage{stackengine}

\newsavebox{\proofbox}
\savebox{\proofbox}{\begin{picture}(7,7)%
	\put(0,0){\framebox(7,7){}}\end{picture}}

\usepackage{latexsym}
\usepackage{pstricks}
\usepackage{comment}

\usepackage[greek,english]{babel}

\begin{document}

\subjclass[2020]{18G80, 16E35, 18G20, 16E10, 16E65}
\keywords{Artin algebra, finitistic dimension, singularity category, pure-injective, triangulated categories}
\thanks{\textbf{Acknowledgements:} I thank Jan Šťovíček for telling me about the different approach in \cite{dey stovicek}. I am grateful to Chrysostomos Psaroudakis for comments and for many discussions on the finitistic dimension over the years.}

\title{Finitistic dimension via modules over the singularity category}

\author[Kostas]{Panagiotis Kostas}
\address{Faliraki, Rhodes, Greece}
\email{pkostas561@gmail.com}

\begin{abstract}
    We combine results of Rickard and Shaul with methods of intrinsic homological algebra and the theory of purity to prove that the finiteness of the big finitistic dimension of an Artin algebra is an intrinsic property of the singularity category of its opposite algebra, via its category of modules. This `object-free' approach complements work of Dey--Šťovíček and arrives at the same conclusion: the finiteness of the finitistic dimension of Artin algebras is a singular invariant (of the opposite algebras). In fact, our characterisation can be used to prove that finite finitistic dimension descends along certain fully faithful functors between singularity categories.
\end{abstract}
\maketitle
\setcounter{tocdepth}{1}

\section{Introduction}
The finitistic dimension conjecture (FDC) is a longstanding open problem in the representation
theory of finite dimensional algebras, see the survey \cite{survey} for details. It was not until very recently that the finiteness of the big finitistic dimension was proven to be a property of the singularity category, a result due to Dey and Šťovíček announced in \cite{stovicek} and proved in the work in preparation \cite{dey stovicek}. In this paper we establish a new criterion to detect the finiteness of the finitistic dimension via the singularity category. Specifically, we realise the finiteness of the finitistic dimension of an Artin algebra $\Lambda$ as an intrinsic property of $\mathsf{D}_{\mathsf{sg}}(\Lambda^{\opp})$, by looking at the category of additive functors $\mathsf{D}_{\mathsf{sg}}(\Lambda^{\opp})^{\opp}\rightarrow\mathsf{Ab}$, i.e.\ the category of \emph{modules} over the singularity category. Our approach differs substantially from that of \cite{dey stovicek}, where the finiteness of $\Findim\Lambda$ is characterised by the existence of a `generator' in $\mathsf{D}_{\mathsf{sg}}(\Lambda^{\opp})$ satisfying certain properties. Our main result, presented below, does not rely on the existence of specific objects.

\begin{introtheorem} \label{thmA}
     Let $\Lambda$ be an Artin algebra and consider $\mathcal{D}=\mathsf{D}_{\mathsf{sg}}(\Lambda^{\opp})$. Then it holds that $\Findim\Lambda<\infty$ if and only if $Q^{\perp_{\gg}}\cap \mathsf{Inj}\mathcal{D}=0$ in $\mathsf{Mod}\mathcal{D}$, where $Q$ denotes the image of $\mathcal{D}$ along Yoneda. 
\end{introtheorem}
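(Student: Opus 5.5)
The plan is to pass from $\mathsf{Mod}\mathcal{D}$ to a compactly generated triangulated category in which $\mathcal{D}$ sits as the compact objects. By Krause, $\mathsf{K}_{\mathsf{ac}}(\mathsf{Inj}\Lambda^{\opp})$ is compactly generated, and its compact objects are $\mathcal{D}=\mathsf{D}_{\mathsf{sg}}(\Lambda^{\opp})$ up to direct summands. Since $\mathsf{Mod}$ does not see idempotent completion, $\mathsf{Mod}\mathcal{D}$ is the module category of the compacts. Write $\mathcal{S}=\mathsf{K}_{\mathsf{ac}}(\mathsf{Inj}\Lambda^{\opp})$ and consider the restricted Yoneda functor
\[
\mathbf{y}\colon \mathcal{S}\to \mathsf{Mod}\mathcal{D},\qquad Y\mapsto \mathrm{Hom}_{\mathcal{S}}(-,Y)|_{\mathcal{D}} .
\]
By the standard theory of purity (Krause, Beligiannis), $\mathbf{y}$ restricts to an equivalence between pure-injective objects of $\mathcal{S}$ and $\mathsf{Inj}\mathcal{D}$.

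The first step is to rewrite $Q^{\perp_{\gg}}\cap\mathsf{Inj}\mathcal{D}$ inside $\mathcal{S}$. Using the intrinsic description of the shift on $\mathsf{Mod}\mathcal{D}$, the condition defining $Q^{\perp_{\gg}}$ on an injective $\mathbf{y}Y$ should become a uniform vanishing $\mathrm{Hom}_{\mathcal{S}}(C[n],Y)=0$ for all compact $C$ and all $n\gg0$. Since $\mathbf{y}Y$ is injective, the higher $\mathrm{Ext}$'s vanish automatically, so only the $\mathrm{Hom}$-part against shifts of representables survives. Thus the right-hand side of Theorem~\ref{thmA} says: \emph{every pure-injective $Y\in\mathcal{S}$ satisfying this uniform one-sided vanishing is zero.}

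The second step brings in Rickard and Shaul. Rickard showed that injective generation for $\Lambda^{\opp}$, or more precisely the relevant boundedness statement, forces $\Findim\Lambda<\infty$. Shaul's work gives the converse reformulation I intend to use: $\Findim\Lambda<\infty$ if and only if every acyclic complex of injective $\Lambda^{\opp}$-modules that is "eventually orthogonal" to $\mathcal{D}$ in the above sense is null-homotopic. Through the compact generation of $\mathcal{S}$, this amounts to the vanishing of the corresponding subcategory $\mathcal{V}\subseteq\mathcal{S}$.

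The key step is then to show that $\mathcal{V}=0$ if and only if $\mathcal{V}$ contains no nonzero pure-injectives. One direction is trivial. For the other, take $0\neq Y\in\mathcal{V}$ and its pure-injective envelope $Y\to PE(Y)$. This map is a pure monomorphism, so $\mathbf{y}Y\to\mathbf{y}PE(Y)$ is the injective envelope in $\mathsf{Mod}\mathcal{D}$. Hence $PE(Y)\neq0$, because $\mathbf{y}Y\neq0$ as $Y$ is nonzero and $\mathcal{S}$ is compactly generated. I must then check that $PE(Y)$ stays in $\mathcal{V}$. Here I would use that the vanishing condition is detected on $\mathbf{y}$ and is preserved under injective envelopes in $\mathsf{Mod}\mathcal{D}$. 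Concretely, I would show that $Q^{\perp_{\gg}}$ is closed under essential extensions, or that it is the torsion-free class of a hereditary torsion pair, which should follow from $Q$ being a set of finitely presented objects closed under shifts.

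I expect this closure property to be the main obstacle, together with matching exactly the boundedness in Shaul's criterion to the uniform "$n\gg0$" condition. If closure under injective envelopes fails, the fallback is to use instead products of pure-injectives $\mathrm{Hom}(C,Y)^{*}$-type duals (Brown--Comenetz) built from $Y$, and to verify the vanishing directly for them.
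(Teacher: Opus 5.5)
Your frame agrees with the paper's. You pass to $\mathcal{T}=\mathsf{K}_{\mathsf{ac}}(\Inj\Lambda^{\opp})$, use Krause's equivalence $\mathsf{PInj}\mathcal{T}\simeq\Inj\mathcal{D}$ induced by $\mathbf{y}$, and read $Q^{\perp_{\gg}}\cap\Inj\mathcal{D}$ as the pure-injectives lying in $\mathcal{T}^-$. Your remark that $\Mod$ does not see idempotent completion is a fine substitute for idempotent completeness of $\mathsf{D}_{\mathsf{sg}}$. Two corrections are needed before the key step:
\begin{itemize}
\item The translated condition is $\mathsf{Hom}_{\mathcal{T}}(C,Y[n])=0$ for $n\gg0$, not $\mathsf{Hom}(C[n],Y)$, and the threshold depends on $C$. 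A uniform threshold would force $Y=0$, since $\mathcal{D}$ is closed under shifts, and the right-hand side would become vacuous.
\item More seriously, Shaul's criterion (Proposition~\ref{prop: theorem of shaul}) is about \emph{bounded above} acyclic complexes of injectives, not an orthogonality condition. The identification $\mathcal{T}^-=\mathsf{K}^-_{\mathsf{ac}}(\Inj\Lambda^{\opp})$ is a separate, nontrivial computation (Lemma~\ref{lem: computation from KPV}). You list it as an obstacle but do not supply it, and both directions of the theorem depend on it.
\end{itemize}

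The genuine gap is your key step: $Q^{\perp_{\gg}}$ is not closed under injective envelopes. It is not a torsion-free class either; it is not even closed under products, because the thresholds can be unbounded.

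Concretely, take $\Lambda=k[x,y]/(x^2,y^2)$. Then $\Findim\Lambda=0$ and $\mathcal{D}\simeq\umod\Lambda$ with $[-1]=\Omega$. The simple module $k$ is not $\Omega$-periodic, since $\dim\Omega^n k=2n+1$. So for every $z$, the simple functor $S=\mathsf{Hom}_{\mathcal{D}}(-,k)/\mathsf{rad}_{\mathcal{D}}(-,k)$ has $S(z[-n])\neq0$ for only finitely many $n$. Hence $0\neq S\in Q^{\perp_{\gg}}$. The theorem gives $Q^{\perp_{\gg}}\cap\Inj\mathcal{D}=0$ here, so the injective envelope of $S$ leaves $Q^{\perp_{\gg}}$. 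Put differently, $Q^{\perp_{\gg}}$ is closed under subobjects, so your closure property would turn the theorem into ``$\Findim\Lambda<\infty$ iff $Q^{\perp_{\gg}}=0$'', which this example refutes.

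What is missing is a way to produce a pure-injective with the \emph{same vanishing pattern} as a given object, rather than an envelope of it. The paper does this by construction. If $\Findim\Lambda=\infty$, Rickard's complex $C=\mathsf{cone}(\bigoplus P_i[-d_i]\to\prod P_i[-d_i])$ is bounded below, acyclic and non-contractible. Its Matlis dual $D(C)$ is then:
\begin{itemize}
\item bounded above and acyclic;
\item pure-injective simply because it is a dual (Lemma~\ref{lem: dual are pure-injective});
\item non-contractible, by testing against $C\otimes_{\Lambda}D(\Lambda)$ and using $\mathsf{K}(\Proj\Lambda)\simeq\mathsf{K}(\Inj\Lambda)$.
\end{itemize}

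Alternatively, your fallback can be made rigorous if it is taken to mean the double Matlis dual. For nonzero $Y\in\mathcal{T}^-$, the module $D(\mathbf{y}Y)$ is homological, hence a flat left module. So $DD(\mathbf{y}Y)$ is injective in $\Mod\mathcal{D}$, and it is nonzero. Moreover $DD(\mathbf{y}Y)(C[-n])\cong DD\,\mathsf{Hom}_{\mathcal{T}}(C,Y[n])$, so it lies in $Q^{\perp_{\gg}}$. Combined with both directions of Shaul and the computation of Lemma~\ref{lem: computation from KPV}, this gives a valid alternative to Rickard's construction. Products of Brown--Comenetz duals of compact objects, by contrast, have no reason to lie in $\mathcal{T}^-$.
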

In the above theorem, $Q^{\perp_{\gg}}$ denotes a certain subcategory of $\mathsf{Mod}\mathcal{D}$ (a `far-away orthogonal' of $Q$). This is defined by using the autoequivalence on $\mathsf{Mod}\mathcal{D}$ induced by the suspension of $\mathcal{D}$, which we always consider as part of its structure. In simpler terms, the theorem above asks that there is no non-trivial additive functor $F\colon \mathsf{D}_{\mathsf{sg}}(\Lambda^{\opp})^{\opp}\rightarrow \mathsf{Ab}$ that is injective in $\Mod \mathcal{D}$ and moreover, for every finitely generated $\Lambda^{\opp}$-module $x$, it holds that $F(\Omega^n(x))=0$ for $n\gg 0$ (depending on $x$). In fact, it is enough to test the last condition only on the simple $\Lambda^{\opp}$-modules and specifically for $x=\Lambda^{\opp}/\mathsf{rad}\Lambda^{\opp}$, see Remark~\ref{remark} for details.  

The proof of Theorem \ref{thmA} combines arguments and results from \cite{krause, krause fin dim, rickard,shaul} with a crucial computation from \cite{KPV}. By combining Theorem \ref{thmA} (applied to $\Lambda^{\opp}$) with work of Cummings \cite{cummings}, it follows that the validity of the FDC depends only on the singularity categories of a very restricted class of algebras (see Remark \ref{cummings} for more details).

\begin{corA} 
    The FDC holds if and only if for every finite dimensional algebra $\Lambda$ with $\Findim\Lambda=0$, there is no nonzero additive functor $F\colon \mathsf{D}_{\mathsf{sg}}(\Lambda)^{\opp}\rightarrow \mathsf{Ab}$ that is injective in $\Mod\mathsf{D}_{\mathsf{sg}}(\Lambda)$ and moreover $F(\Omega^n(\Lambda/\mathsf{rad}\Lambda))=0$ for $n\gg0$.
\end{corA}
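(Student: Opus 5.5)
This corollary follows by combining Theorem \ref{thmA}, applied to $\Lambda^{\opp}$, with Cummings' reduction \cite{cummings}. I also use the reformulation of $Q^{\perp_{\gg}}\cap\Inj\mathcal{D}$ recorded after Theorem \ref{thmA}, with the test object reduced to $x=\Lambda/\mathsf{rad}\Lambda$ as in Remark~\ref{remark}.

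\textbf{Step 1 (reduction via Cummings).} Cummings shows that the FDC holds for all finite dimensional algebras if and only if $\Findim\Lambda<\infty$ for every finite dimensional algebra $\Lambda$ with $\Findim\Lambda^{\opp}=0$, i.e.\ with vanishing finitistic dimension on the other side. I will state this precisely as it appears in \cite{cummings}. The key point is that every algebra can be replaced, without changing whether its finitistic dimension is finite, by one with $\Findim=0$ on the opposite side. The exact side convention in \cite{cummings} must be checked, since the corollary phrases its condition as $\Findim\Lambda=0$ and talks about $\mathsf{D}_{\mathsf{sg}}(\Lambda)$. After applying Theorem \ref{thmA} to $\Lambda^{\opp}$, this side of Cummings' statement is exactly what is needed: $\Findim\Lambda^{\opp}<\infty$ for all $\Lambda$ with $\Findim\Lambda=0$.

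\textbf{Step 2 (apply the theorem to $\Lambda^{\opp}$).} For such $\Lambda$, Theorem \ref{thmA} with $\Lambda^{\opp}$ in place of $\Lambda$ gives
\[
\Findim\Lambda^{\opp}<\infty \iff Q^{\perp_{\gg}}\cap\Inj\mathsf{D}_{\mathsf{sg}}(\Lambda)=0 .
\]
Unwinding this with Remark~\ref{remark}, the right-hand side says exactly that there is no nonzero injective $F\in\Mod\mathsf{D}_{\mathsf{sg}}(\Lambda)$ with $F(\Omega^n(\Lambda/\mathsf{rad}\Lambda))=0$ for $n\gg0$.

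\textbf{Step 3 (combine).} Suppose the FDC holds. Then every $\Findim\Lambda^{\opp}$ is finite, and Step 2 yields the stated vanishing condition. Conversely, suppose the vanishing condition holds for all $\Lambda$ with $\Findim\Lambda=0$. Step 2 gives $\Findim\Lambda^{\opp}<\infty$ for all such $\Lambda$, and Step 1 then yields the FDC.

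\textbf{Main obstacle.} Everything rests on quoting Cummings' reduction correctly. One must confirm that it requires vanishing finitistic dimension on precisely the side matching the statement. If it is phrased for the other side, I will pass to opposite algebras, using that the FDC as a statement over all algebras is invariant under $\Lambda\mapsto\Lambda^{\opp}$. One must also confirm that the reduction preserves finite dimensionality over a field. The remaining steps are formal.
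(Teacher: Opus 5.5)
Your proposal is correct and is essentially the paper's argument. The paper also applies Theorem~\ref{thmA} to $\Lambda^{\opp}$, combines it with Cummings' reduction \cite[Theorem 3.5]{cummings} as in Remark~\ref{cummings}(ii), and uses Remark~\ref{remark} to reduce membership in $Q^{\perp_{\gg}}$ to the single test object $\Lambda/\mathsf{rad}\Lambda$. The side-convention issue you flag as the main obstacle is not a real one: quantified over all finite dimensional algebras, the two phrasings of Cummings' condition are interchanged by the substitution $\Lambda\mapsto\Lambda^{\opp}$.
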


Since the finiteness of the finitistic dimension is an intrinsic property of the singularity category of the opposite algebra, it is also a singular invariant (see Corollary~\ref{singular invariant}). As already mentioned, we prove a much stronger statement about fully faithful functors.

\begin{introtheorem} \label{thmB}
    Let $\Gamma,\Lambda$ be Artin algebras and assume the existence of a fully faithful triangle functor $\mathsf{D}_{\mathsf{sg}}(\Gamma^{\opp})\hookrightarrow\mathsf{D}_{\mathsf{sg}}(\Lambda^{\opp})$ which has a right adjoint. If it holds that $\Findim\Lambda<\infty$, then $\Findim\Gamma<\infty$.  
\end{introtheorem}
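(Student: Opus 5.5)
The plan is to reduce Theorem~\ref{thmB} to Theorem~\ref{thmA}, applied once to $\Lambda$ and once to $\Gamma$. Put $\mathcal{C}=\mathsf{D}_{\mathsf{sg}}(\Gamma^{\opp})$, $\mathcal{D}=\mathsf{D}_{\mathsf{sg}}(\Lambda^{\opp})$, let $f\colon\mathcal{C}\hookrightarrow\mathcal{D}$ be the fully faithful triangle functor and $g\colon\mathcal{D}\to\mathcal{C}$ its right adjoint. The adjoint $g$ is again a triangle functor, commutes with suspension, and satisfies $gf\cong\mathrm{id}$ because $f$ is fully faithful. By Theorem~\ref{thmA} it suffices to take an injective $F\in\Mod\mathcal{C}$ lying in $Q_{\mathcal{C}}^{\perp_{\gg}}$ and show that $F=0$.

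\textbf{Transport.} Restriction along $g$ gives the functor $g^*\colon\Mod\mathcal{C}\to\Mod\mathcal{D}$, $F\mapsto F\circ g$. Its left adjoint is the left Kan extension $g_!$, which sends representables to representables: $g_!\,\mathcal{C}(-,c)\cong \mathcal{D}(-,?)$ is computed from $\mathcal{C}(g(-),c)$. Since $f\dashv g$, we have $\mathcal{C}(g(-),c)$ versus $\mathcal{D}(-,fc)$; more directly, $F\circ g$ restricted via $f$ is $F\circ gf\cong F$. Also note that $\mathcal{D}(-,d)\circ f\cong\mathcal{C}(-,gd)$ by the adjunction. I will use the following facts.

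\begin{enumerate}
\item[(a)] $g^*$ is exact and has the exact left adjoint $f^*$ (restriction along $f$). Indeed, $f\dashv g$ induces $f^*\dashv g^*$ on module categories, and $f^*$ is exact. Since $g^*$ is right adjoint to an exact functor, it preserves injectives. So $G:=g^*F=F\circ g$ is injective in $\Mod\mathcal{D}$.
\item[(b)] $f^*g^*F\cong F$, so $G\neq 0$ whenever $F\neq0$.
\item[(c)] $g^*$ commutes with the autoequivalences induced by suspension, because $g$ is a triangle functor.
\end{enumerate}

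\textbf{Orthogonality.} It remains to check that $G\in Q_{\mathcal{D}}^{\perp_{\gg}}$. The orthogonality condition concerns Hom (or Ext) from suspended representables, and by Remark~\ref{remark} it amounts to vanishing on high syzygies: $G(\Omega^n x)=0$ for $n\gg0$, for finitely generated $\Lambda^{\opp}$-modules $x$. Unwinding with Yoneda, $G(\Sigma^{-n}d)=F(\Sigma^{-n}gd)$. So I need, for each object $d$ of $\mathcal{D}$, that $F(\Sigma^{-n}gd)=0$ for $n\gg0$. This holds provided the far-away orthogonal is defined by the condition ``$\mathrm{Hom}(\Sigma^{-n}y,-)=0$ for $n\gg0$, for every $y\in Q$'', and that the condition for $F$ is available at every object of $\mathcal{C}$, in particular at $gd$. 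The apparent restriction to modules is harmless here: every object of $\mathcal{C}$ is a shift of a module, via syzygies, and shifts only change $n$ by a bounded amount. Granting this, $G$ is a nonzero injective in $Q_{\mathcal{D}}^{\perp_{\gg}}$, which contradicts Theorem~\ref{thmA} for $\Lambda$, since $\Findim\Lambda<\infty$. Hence $F=0$, and Theorem~\ref{thmA} gives $\Findim\Gamma<\infty$.

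\textbf{Main obstacle.} The delicate point is matching the precise definition of $Q^{\perp_{\gg}}$ with the computation above. If the definition uses $\mathrm{Ext}$ in $\Mod$ rather than evaluation, I must check that $g^*$ is compatible with it. For that I would use the adjunction $f^*\dashv g^*$ together with the fact that $f^*$ sends representables to representables, $f^*\mathcal{D}(-,d)\cong\mathcal{C}(-,gd)$, so $f^*$ preserves projectives and, being exact, gives $\mathrm{Ext}^i_{\mathcal{D}}(M,g^*F)\cong\mathrm{Ext}^i_{\mathcal{C}}(f^*M,F)$. Taking $M=\Sigma^{-n}\mathcal{D}(-,d)$ then reduces everything to the corresponding statement for $F$ at $\Sigma^{-n}gd$, where it holds by hypothesis.
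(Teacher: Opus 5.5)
Your proof is correct and is essentially the paper's argument. Both transport $F$ along $g^*$ ($\simeq f_*$), which preserves injectives as the right adjoint of the exact $f^*$, commutes with $\Sigma$, and satisfies $f^*g^*\simeq \mathrm{Id}$ because $gf\simeq\mathrm{Id}$; Theorem A is then applied to $\Lambda$ and to $\Gamma$. You check $g^*F\in Q_{\mathcal{D}}^{\perp_{\gg}}$ by evaluating $(\Sigma^n g^*F)(d)\cong F((gd)[-n])$ directly, whereas the paper phrases the same computation through the adjunction $g^!\dashv g^*$ with $g^!(Q_\Lambda)\subseteq Q_\Gamma$. Your Ext discussion is unnecessary, since $Q^{\perp_{\gg}}$ is defined via Hom, and so is the garbled remark about $g_!\,\mathcal{C}(-,c)$, which your argument never uses.
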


\section{The proofs of the theorems}
In this section we provide the proofs of the theorems stated in the Introduction, for which we collect several preliminaries. Throughout we fix an Artin algebra $\Lambda$ over a commutative artinian ring $k$. By module we always mean a right module. We agree on the following notation 

\begin{itemize}[labelindent=\parindent,leftmargin=*,label=(MPT\arabic*)]
    \item[$\Mod\Lambda$] \hspace{0.4cm} the category of $\Lambda$-modules;
    \item[$\smod \Lambda$] \hspace{0.4cm} the category of finitely generated $\Lambda$-modules;
    \item[$\Proj\Lambda$] \hspace{0.4cm} the category of projective $\Lambda$-modules;
    \item[$\Inj\Lambda$] \hspace{0.4cm} the category of injective $\Lambda$-modules; 
    \item[$\proj\Lambda$] \hspace{0.4cm} the category of finitely generated projective $\Lambda$-modules.
\end{itemize} 
For an additive category $\mathcal{X}$, we further consider 
\begin{itemize}[labelindent=\parindent,leftmargin=*,label=(MPT\arabic*)]
    \item[$\mathsf{C}(\mathcal{X})$] \hspace{0.4cm} the category of (cochain) complexes of $\mathcal{X}$;
    \item[$\mathsf{K}(\mathcal{X})$] \hspace{0.4cm} the homotopy category of complexes of $\mathcal{X}$;
    \item[$\mathsf{K}^{\mathsf{b}}(\mathcal{X})$] \hspace{0.4cm} the subcategory of $\mathsf{K}(\mathcal{X})$ that consists of bounded complexes. 
\end{itemize}
We set $\mathsf{C}(\Lambda)=\mathsf{C}(\Mod\Lambda)$. Moreover, we will use the following derived categories
\begin{itemize}[labelindent=\parindent,leftmargin=*,label=(MPT\arabic*)]
    \item[$\mathsf{D}(\Lambda)$] \hspace{0.4cm} the derived category of $\Mod \Lambda$; 
    \item[$\mathsf{D}^{\mathsf{b}}(\smod \Lambda)$] \hspace{0.4cm} the bounded derived category of $\smod\Lambda$. 
\end{itemize}
Lastly, we denote by $D$ the Matlis duality $\mathsf{Hom}_{k}(-,E)\colon\Mod \Lambda\rightarrow\Mod\Lambda^{\opp}$, where $E$ is a minimal injective cogenerator over $k$. It satisfies that $D(\Proj\Lambda)\subseteq \Inj\Lambda^{\opp}$.

\subsection{Compact generation} Let $\mathcal{T}$ be a triangulated category that has set-indexed coproducts. An object $c$ of $\mathcal{T}$ is \emph{compact} if $\mathsf{Hom}_{\mathcal{T}}(c,-)$ commutes with coproducts. We denote by $\mathcal{T}^{\mathsf{c}}$ the subcategory of compact objects and recall that $\mathcal{T}$ is \emph{compactly generated} if $\mathcal{T}^{\mathsf{c}}$ is skeletally small and $(\mathcal{T}^{\mathsf{c}})^{\perp}=0$. 

\subsection{Singularity categories} We denote by $\mathsf{K}_{\mathsf{ac}}(\Inj\Lambda)$ the subcategory of $\mathsf{K}(\Inj\Lambda)$ that consists of acyclic complexes. Moreover, we use $\mathsf{D}_{\mathsf{sg}}(\Lambda)$ to denote the \emph{singularity category} of $\Lambda$, i.e.\ the Verdier quotient $\mathsf{D}^{\mathsf{b}}(\smod \Lambda)/\mathsf{K}^{\mathsf{b}}(\proj\Lambda)$.

\begin{lem} \label{lem: big sing is cg}
    The triangulated categories $\mathsf{K}(\Inj\Lambda)$ and  $\mathsf{K}_{\mathsf{ac}}(\Inj\Lambda)$ are compactly generated. Moreover, it holds that $\mathsf{K}_{\mathsf{ac}}(\Inj\Lambda)^{\mathsf{c}}\simeq \mathsf{D}_{\mathsf{sg}}(\Lambda)$. 
\end{lem}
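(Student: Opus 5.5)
This is Krause's theorem on the stable derived category (Krause, "The stable derived category of a Noetherian scheme"), specialised to Artin algebras. I will reduce to it by recalling its ingredients rather than reprove everything. The algebra $\Lambda$ is right noetherian, so $\Inj\Lambda$ is closed under coproducts and every injective is a coproduct of indecomposable injectives. Coproducts in $\mathsf{K}(\Inj\Lambda)$ are therefore computed degreewise, and $\mathsf{K}_{\mathsf{ac}}(\Inj\Lambda)$ is closed under them.

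First, I show that $\mathsf{K}(\Inj\Lambda)$ is compactly generated with $\mathsf{K}(\Inj\Lambda)^{\mathsf{c}}\simeq\mathsf{D}^{\mathsf{b}}(\smod\Lambda)$.
\begin{itemize}
\item For $x\in\mathsf{D}^{\mathsf{b}}(\smod\Lambda)$, take an injective resolution $\mathbf{i}x$, i.e.\ a K-injective, left-bounded complex of injectives. Then $\mathsf{Hom}_{\mathsf{K}(\Inj\Lambda)}(\mathbf{i}x,Y)\cong\mathsf{Hom}_{\mathsf{D}(\Lambda)}(x,Y)$ for every $Y\in\mathsf{K}(\Inj\Lambda)$. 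This is the key computation: represent $x$ by a bounded-above complex of finitely generated projectives $\mathbf{p}x$ and use the comparison $\mathbf{p}x\to\mathbf{i}x$. The map $\mathsf{Hom}(\mathbf{i}x,Y)\to\mathsf{Hom}(\mathbf{p}x,Y)$ is an isomorphism because its cone is acyclic and bounded below, and $Y$ consists of injectives.
\item Since $\Lambda$ is noetherian, $\mathsf{Hom}_{\mathsf{K}}(\mathbf{p}x,-)$ commutes with degreewise coproducts, as each term of $\mathbf{p}x$ is finitely generated. Hence $\mathbf{i}x$ is compact.
\item For generation, suppose $\mathsf{Hom}(\mathbf{i}x,\Sigma^nY)=0$ for all finitely generated modules $x$ and all $n$. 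Using the truncated complexes $\mathbf{i}$ of the cyclic modules $\Lambda/I$, Baer's criterion shows that the cycles and boundaries of $Y$ split off injectively, so $Y$ is contractible.
\item The compacts are then the thick closure of the $\mathbf{i}x$, which is equivalent to $\mathsf{D}^{\mathsf{b}}(\smod\Lambda)$ by the fully faithfulness above.
\end{itemize}

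Next, I use the recollement $\mathsf{K}_{\mathsf{ac}}(\Inj\Lambda)\to\mathsf{K}(\Inj\Lambda)\to\mathsf{D}(\Lambda)$. The quotient functor $Q\colon\mathsf{K}(\Inj\Lambda)\to\mathsf{D}(\Lambda)$ has fully faithful left adjoint $\mathbf{p}$ (K-projective resolution composed with $\mathbf{i}$) and right adjoint $\mathbf{i}$, and its kernel is exactly $\mathsf{K}_{\mathsf{ac}}(\Inj\Lambda)$. The functor $Q$ sends compacts to compacts ($\mathsf{D}(\Lambda)^{\mathsf{c}}=\mathsf{K}^{\mathsf{b}}(\proj\Lambda)$) and preserves coproducts. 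Its left adjoint sends the compact generator $\Lambda$ to the compact $\mathbf{i}\Lambda$. Neeman's localisation theorem (Neeman--Thomason) then yields two conclusions:
\begin{itemize}
\item the kernel $\mathsf{K}_{\mathsf{ac}}(\Inj\Lambda)$ is compactly generated, by the images under the left adjoint of the inclusion (the acyclic parts) of the compacts of $\mathsf{K}(\Inj\Lambda)$;
\item $\mathsf{K}_{\mathsf{ac}}(\Inj\Lambda)^{\mathsf{c}}$ is, up to direct summands, the Verdier quotient $\mathsf{D}^{\mathsf{b}}(\smod\Lambda)/\mathsf{K}^{\mathsf{b}}(\proj\Lambda)=\mathsf{D}_{\mathsf{sg}}(\Lambda)$.
\end{itemize}

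The main obstacle is the idempotent completion. Neeman gives $\mathsf{K}_{\mathsf{ac}}(\Inj\Lambda)^{\mathsf{c}}$ only as the idempotent completion of $\mathsf{D}_{\mathsf{sg}}(\Lambda)$. For an Artin algebra, $\mathsf{D}_{\mathsf{sg}}(\Lambda)$ is Hom-finite over $k$, so its objects have semiperfect endomorphism rings and it is Krull--Schmidt. It is therefore already idempotent complete, which gives the stated equivalence $\mathsf{K}_{\mathsf{ac}}(\Inj\Lambda)^{\mathsf{c}}\simeq\mathsf{D}_{\mathsf{sg}}(\Lambda)$. Alternatively, I would cite Chen's result that singularity categories of Artin algebras are idempotent complete.
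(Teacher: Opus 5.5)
Your overall route is the paper's. The paper proves the lemma purely by citation: it takes compact generation of both categories, and $\mathsf{K}_{\mathsf{ac}}(\Inj\Lambda)^{\mathsf{c}}$ being the idempotent completion of $\mathsf{D}_{\mathsf{sg}}(\Lambda)$, from Krause's Proposition 2.3 and Corollary 5.4. It then uses Chen's Corollary 2.4 for idempotent completeness. However, two ingredients you supply yourself are false as stated.

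\textbf{The key computation.} The formula $\mathsf{Hom}_{\mathsf{K}(\Inj\Lambda)}(\mathbf{i}x,Y)\cong\mathsf{Hom}_{\mathsf{D}(\Lambda)}(x,Y)$ for all $Y$ cannot hold.
\begin{itemize}
\item For acyclic $Y$ the right-hand side vanishes. Combined with your generation step, this would give $\mathsf{K}_{\mathsf{ac}}(\Inj\Lambda)=0$ for every $\Lambda$. That already fails for $\Lambda=k[x]/(x^2)$ over a field, witnessed by $\cdots\xrightarrow{x}\Lambda\xrightarrow{x}\Lambda\xrightarrow{x}\cdots$.
\item Your justification breaks because for non-perfect $x$ the complex $\mathbf{p}x$ is unbounded to the left. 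So the cone of $\mathbf{p}x\to\mathbf{i}x$ is not bounded below.
\item For the same reason $\mathsf{Hom}_{\mathsf{K}}(\mathbf{p}x,-)=\mathsf{Hom}_{\mathsf{D}(\Lambda)}(x,-)$ does not commute with coproducts, so your compactness argument fails too.
\end{itemize}
The correct comparison is with $x$ itself, a bounded complex of finitely generated modules. The cone of $x\to\mathbf{i}x$ is bounded below and acyclic, so $\mathsf{Hom}_{\mathsf{K}}(\mathbf{i}x,Y)\cong\mathsf{Hom}_{\mathsf{K}}(x,Y)$. These are homotopy classes, not morphisms in $\mathsf{D}(\Lambda)$. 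This Hom commutes with coproducts because $x$ has finitely many finitely presented terms.

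This corrected formula is what your Baer-criterion step actually uses. It is also what gives $\mathsf{K}_{\mathsf{ac}}(\Inj\Lambda)=(\mathbf{i}\Lambda)^{\perp}$, via $\mathsf{Hom}_{\mathsf{K}}(\mathbf{i}\Lambda,Y[n])\cong H^n(Y)$. Full faithfulness on $\mathsf{D}^{\mathsf{b}}(\smod\Lambda)$ survives, since $\mathbf{i}y$ is K-injective. Relatedly, $\mathbf{i}\circ\mathbf{p}\cong\mathbf{i}$ is the \emph{right} adjoint of $\mathsf{K}(\Inj\Lambda)\to\mathsf{D}(\Lambda)$, not the left one. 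All you need from the left adjoint is that it sends $\Lambda$ to $\mathbf{i}\Lambda$.

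\textbf{Idempotent completeness.} Your primary argument is wrong, because $\mathsf{D}_{\mathsf{sg}}(\Lambda)$ need not be Hom-finite. Take $\Lambda=k[x,y]/(x,y)^2$ and $S=k$.
\begin{itemize}
\item One has $\Omega^nS\cong S^{2^n}$, and no nonzero map between semisimple modules factors through a projective.
\item Hence $\mathsf{End}_{\mathsf{D}_{\mathsf{sg}}(\Lambda)}(S)\cong\varinjlim_n\underline{\mathsf{End}}(\Omega^nS)\cong\varinjlim_n M_{2^n}(k)$, with injective transition maps $A\mapsto A\otimes I_2$.
\item This ring is infinite-dimensional and not semiperfect, so $\mathsf{D}_{\mathsf{sg}}(\Lambda)$ is not Krull--Schmidt.
\end{itemize}
Idempotent completeness holds for a different reason, coming from $\umod\Lambda$. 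Every object is a shift of a module $x$, and an idempotent on $x$ in $\mathsf{D}_{\mathsf{sg}}(\Lambda)$ is represented by some $e\in\underline{\mathsf{End}}(\Omega^n x)$. Then $\Omega^m(e)$ is a genuine idempotent for $m\gg0$, and it splits in the Krull--Schmidt category $\umod\Lambda$. So your conclusion stands only through your fallback citation of Chen, which is exactly what the paper does.
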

\begin{proof}
    By \cite[Proposition 2.3]{krause stable} and \cite[Corollary 5.4]{krause stable} we know that $\mathsf{K}(\Inj\Lambda)$ and $\mathsf{K}_{\mathsf{ac}}(\Inj\Lambda)$ are compactly generated. Moreover, the subcategory of compact objects of the latter is the idempotent completion of $\mathsf{D}_{\mathsf{sg}}(\Lambda)$. Since $\Lambda$ is an Artin algebra, we know by \cite[Corollary 2.4]{chen} that its singularity category is idempotent complete, which yields the second claim. 
\end{proof}

\subsection{Far-away orthogonals} We follow \cite{KPV} and \cite[Appendix B]{krause fin dim} and for any additive category $\mathcal{C}$ equipped with an equivalence $\Sigma\colon \mathcal{C}\to \mathcal{C}$ and, for any collection $\X$ of objects in $\mathcal{C}$, we consider the full subcategory
\[
{\X}^{\perp_{\gg}}=\{y\in \mathcal{C}: \forall x\in\X,\mathsf{Hom}_{\mathcal{C}}(x,\Sigma^ny)= 0 \text{ for }n\gg 0\}.
\]
In particular, when $\mathcal{T}$ is a triangulated category and $\Sigma=[1]$, then $(\mathcal{T}^{\mathsf{c}})^{\perp_{\gg}}\eqqcolon \mathcal{T}^{-}$ is a thick subcategory of $\mathcal{T}$, which we call the subcategory of \emph{bounded above} objects of $\mathcal{T}$, see \cite{KPV}. The following result was shown in \cite[Proposition 3.21]{KPV}.

\begin{lem} \label{lem: computation from KPV}
    Consider $\mathcal{T}=\mathsf{K}_{\mathsf{ac}}(\Inj\Lambda)$. Then $\mathcal{T}^-=\mathsf{K}^-_{\mathsf{ac}}(\Inj\Lambda)$, where $\mathsf{K}^-_{\mathsf{ac}}(\Inj\Lambda)$ is the subcategory of $\mathsf{K}_{\mathsf{ac}}(\Inj\Lambda)$ that consists of the bounded above complexes.
\end{lem}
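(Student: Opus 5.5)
We will prove the two inclusions $\mathsf{K}^-_{\mathsf{ac}}(\Inj\Lambda)\subseteq\mathcal{T}^-$ and $\mathcal{T}^-\subseteq \mathsf{K}^-_{\mathsf{ac}}(\Inj\Lambda)$, where $\mathsf{K}^-_{\mathsf{ac}}(\Inj\Lambda)$ is understood up to homotopy equivalence. Throughout we use Lemma~\ref{lem: big sing is cg}, and we write the compact objects of $\mathcal{T}$ explicitly. A finitely generated module $M$ corresponds to the acyclic complex $tM$ obtained by splicing a (possibly infinite) injective resolution $M\to I$ with an injective ``coresolution from the left''. For the left part, take a projective resolution $P\to M$ by finitely generated projectives and apply $\nu=D\mathsf{Hom}_\Lambda(-,\Lambda)$, or equivalently use the Krause recollement $\mathsf{K}_{\mathsf{ac}}(\Inj\Lambda)\to\mathsf{K}(\Inj\Lambda)\to\mathsf{D}(\Lambda)$. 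Concretely, the compacts of $\mathcal{T}$ are the images under the left adjoint (the ``acyclization'' functor) of the compacts $\mathbf{i}M$ of $\mathsf{K}(\Inj\Lambda)$, where $\mathbf{i}M$ is an injective resolution of $M\in\smod\Lambda$. Since every compact of $\mathcal{T}$ is, up to shifts, cones and summands, built from such images (for instance of $M=\Lambda/\mathsf{rad}\Lambda$), it suffices to test the defining condition $\mathsf{Hom}(c,\Sigma^n y)=0$ for $n\gg0$ against one such generator $c$.

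We then compute $\mathsf{Hom}_{\mathcal{T}}(c,\Sigma^n Y)$ for $Y\in\mathcal{T}$. The plan is to identify this, via the adjunction, with $\mathsf{Hom}_{\mathsf{K}(\Inj\Lambda)}(\mathbf{i}M,\Sigma^nY)$ modulo a correction, using the triangle $\mathbf{i}M\to \mathbf{i}M\to$ (cone) relating $\mathbf{i}M$ to its acyclic part. A cleaner route is to use the triangle $c\to \mathbf{i}M\to \mathbf{p}M'$ in $\mathsf{K}(\Inj\Lambda)$, where the third term is $\nu$ of a projective resolution and hence lies in ${}^{\perp}\mathsf{K}_{\mathsf{ac}}(\Inj\Lambda)$ up to shift. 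From this we get
\[
\mathsf{Hom}_{\mathcal{T}}(c,\Sigma^nY)\cong \mathsf{Hom}_{\mathsf{K}(\Inj\Lambda)}(\mathbf{i}M,\Sigma^nY)\cong H^n\mathsf{Hom}_\Lambda(M,Y),
\]
since $\mathbf{i}M$ is a bounded below complex of injectives that is quasi-isomorphic to $M$. Because $Y$ is acyclic with injective terms, $H^n\mathsf{Hom}_\Lambda(M,Y)$ is computed by $\mathsf{Hom}_\Lambda(M,Z^nY)$ modulo maps factoring through $Y^{n-1}$. That is, it equals $\underline{\mathsf{Hom}}$-type groups $\overline{\mathsf{Hom}}_\Lambda(M,Z^nY)$, computed modulo maps factoring through injectives.

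For the inclusion $\supseteq$: if $Y$ is bounded above, then $Y^n=0$ for $n\gg0$, so these groups vanish for large $n$. For the inclusion $\subseteq$: suppose $\overline{\mathsf{Hom}}(M,Z^nY)=0$ for $n\gg0$, with $M=\Lambda/\mathsf{rad}\Lambda$. Then every map from a simple module to $Z^nY$ factors through the inclusion $Z^nY\hookrightarrow Y^n$, and in fact must factor through an injective. We want to deduce that $Z^nY$ is injective for $n\gg0$. We will do this by showing that the socle of $Z^nY$ splits off into injective hulls, using that $\Lambda$ is Artin so every module is an essential extension of its socle. Once $Z^NY$ is injective, the sequence $0\to Z^N Y\to Y^N\to\cdots$ splits, and $Y$ is homotopy equivalent to its brutal truncation $\sigma^{\le N}$-type complex $\cdots\to Y^{N-1}\to Z^NY\to0$, which is bounded above, acyclic, and has injective terms.

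The main obstacle is this last step, namely passing from vanishing of stable Hom from simples to injectivity of the cycle modules $Z^nY$, which are not finitely generated. The approach we will try first is a dimension-shifting argument. Vanishing for all $n\ge N$ gives $\mathrm{Ext}^1_\Lambda(\Lambda/\mathsf{rad}\Lambda,Z^{n}Y)\cong\overline{\mathsf{Hom}}(\Lambda/\mathsf{rad}\Lambda,Z^{n+1}Y)=0$, using the short exact sequences $0\to Z^n\to Y^n\to Z^{n+1}\to0$ with $Y^n$ injective. Vanishing of $\mathrm{Ext}^1$ from all simples then forces injectivity by Baer's criterion: over an Artin algebra, $\mathrm{Ext}^1(S,-)=0$ for all simples implies $\mathrm{Ext}^1(N,-)=0$ for every finite length $N$, and hence for all cyclic modules $\Lambda/I$.
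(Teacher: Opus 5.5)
Your argument is correct in substance, and it takes a different route from the paper: the paper gives no proof of this lemma and simply imports it from \cite[Proposition 3.21]{KPV}, while you verify it directly.

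Write $I\colon\mathcal{T}\to\mathsf{K}(\Inj\Lambda)$ for the inclusion and $I_\lambda$ for its left adjoint. For $Y$ acyclic with injective terms, your chain is $\mathsf{Hom}_{\mathcal{T}}(I_\lambda\mathbf{i}M,Y[n])\cong\mathsf{Hom}_{\mathsf{K}(\Inj\Lambda)}(\mathbf{i}M,Y[n])\cong H^n\mathsf{Hom}_\Lambda(M,Y)\cong\mathrm{Ext}^1_\Lambda(M,Z^{n-1}Y)$.
\begin{itemize}
\item For $\mathsf{K}^-_{\mathsf{ac}}(\Inj\Lambda)\subseteq\mathcal{T}^-$: if $Y$ is bounded above, these groups vanish for $n\gg0$. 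The compacts $c$ with $\mathsf{Hom}(c,Y[n])=0$ for $n\gg0$ form a thick subcategory containing every $I_\lambda\mathbf{i}M$, so this holds for all of $\mathcal{T}^{\mathsf{c}}$.
\item For $\mathcal{T}^-\subseteq\mathsf{K}^-_{\mathsf{ac}}(\Inj\Lambda)$: testing only $M=\Lambda/\mathsf{rad}\Lambda$ gives $\mathrm{Ext}^1_\Lambda(S,Z^mY)=0$ for all simple $S$ and $m\gg0$. Since cyclic modules have finite length, Baer's criterion makes $Z^mY$ injective.
\item Then $Y\cong(\cdots\to Y^{N-1}\to Z^NY\to0)$ in $\mathsf{K}(\Inj\Lambda)$. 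The inclusion of this truncation into $Y$ is degreewise split, and its cokernel is a bounded below acyclic complex of injectives, hence contractible.
\end{itemize}
Compared with the citation, this is elementary and self-contained. It also shows concretely that membership in $\mathcal{T}^-$ amounts to $\mathrm{Ext}^1_\Lambda(\Lambda/\mathsf{rad}\Lambda,Z^mY)=0$ for $m\gg0$, and where the Artin hypothesis enters. The citation keeps the paper short and places the lemma in the general framework of bounded above objects of \cite{KPV}.

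Two of your justifications are wrong, though neither carries weight in the final argument.

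First, the recollement triangle. Let $Q_\lambda$ be the left adjoint of $Q\colon\mathsf{K}(\Inj\Lambda)\to\mathsf{D}(\Lambda)$. The triangle is $Q_\lambda M\to\mathbf{i}M\to I_\lambda\mathbf{i}M\to$, not $c\to\mathbf{i}M\to\cdots$. Moreover, $Q_\lambda M$ is not $\nu$ of a projective resolution in general: already $Q_\lambda\Lambda\cong\mathbf{i}\Lambda$, and $\nu\Lambda=D\Lambda$ need not lie in ${}^{\perp}\mathcal{T}$. For instance, take $\Lambda=k[x,y]/(x,y)^2$ over a field. There is an acyclic $Y$ with $Y^n\cong(D\Lambda)^{(\mathbb{N})}$ and $Z^nY\cong k^{(\mathbb{N})}$, and then $\mathsf{Hom}_{\mathsf{K}(\Inj\Lambda)}(D\Lambda,Y[n])\cong\mathrm{Ext}^1_\Lambda(D\Lambda,k^{(\mathbb{N})})\neq0$. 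So the ``splice with $\nu\mathbf{p}M$'' description of the compacts is only the Gorenstein (complete resolution) picture. You should simply invoke the adjunction $I_\lambda\dashv I$. For the second isomorphism, the relevant fact is that a bounded below acyclic complex admits only null-homotopic maps to an \emph{arbitrary} complex of injectives.

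Second, the description of $H^n\mathsf{Hom}_\Lambda(M,Y)$. It is $\mathsf{Hom}_\Lambda(M,Z^nY)$ modulo maps factoring through the epimorphism $Y^{n-1}\twoheadrightarrow Z^nY$, not modulo all maps factoring through injectives. In the same example with $M=D\Lambda$, the former quotient is $\mathrm{Ext}^1_\Lambda(D\Lambda,Z^{n-1}Y)\neq0$, while the stable Hom modulo injectives vanishes.

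Your final step uses the correct $\mathrm{Ext}^1$ description, so the proof stands once these sentences are removed. The socle-splitting detour and the remark about factoring through $Z^nY\hookrightarrow Y^n$ should be dropped as well.
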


Recall that the \emph{big finitistic dimension} of $\Lambda$ is defined as 
\[
\Findim\Lambda\coloneqq \mathsf{sup}\{\pd x:x\in\Mod\Lambda, \pd x<\infty\}.
\]
The \emph{`big' finitistic dimension conjecture} (which we simply refer to as FDC) asserts that $\Findim\Lambda<\infty$ whenever $\Lambda$ is a finite dimensional algebra over a field. As proved in \cite[Theorem 5.1]{shaul}, the bounded above acyclic complexes of injectives control the finiteness of the finitistic dimension, in the following sense. 

\begin{prop} \label{prop: theorem of shaul}
    It holds that $\Findim\Lambda<\infty$ if and only if $\mathsf{K}^-_{\mathsf{ac}}(\Inj\Lambda^{\opp})=0$. 
\end{prop}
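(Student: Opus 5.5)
The plan is to prove both implications by passing between bounded above acyclic complexes of injective $\Lambda^{\opp}$-modules and one-sided bounded acyclic complexes of projective $\Lambda$-modules, using the Matlis duality $D$. Two standard facts about an Artin algebra $\Lambda$ will be used throughout. First, $\Lambda$ is perfect and noetherian, so flat modules are projective and products of projectives are projective. Second, a $\Lambda^{\opp}$-module $M$ is injective if and only if $DM$ is flat, equivalently projective, over $\Lambda$; this is Lambek's criterion over a noetherian ring. With these, $D$ sends $\Inj\Lambda^{\opp}$ into $\Proj\Lambda$ and reflects injectivity.

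\emph{($\Rightarrow$).} Assume $\Findim\Lambda=d<\infty$ and let $X=(\cdots\to I^{-1}\to I^0\to 0)$ be a bounded above acyclic complex of injective $\Lambda^{\opp}$-modules. Applying $D$ gives an exact complex of projective $\Lambda$-modules
\[
0\to P_0\to P_1\to P_2\to\cdots,\qquad P_k=DI^{-k},
\]
and we write $C_k=\mathsf{coker}(P_{k-1}\to P_k)$. The complex $0\to P_0\to\cdots\to P_k\to C_k\to 0$ is a finite projective resolution, so $\pd C_k\le d$ for every $k$. For $k>d$ the image of $P_{k-d-1}$ in $P_{k-d}$ is therefore a $d$-th syzygy of $C_k$, hence projective. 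Consequently every short exact sequence $0\to Z_j\to P_j\to Z_{j+1}\to 0$ into which $DX$ decomposes has projective ends, so it splits, and $DX$ is contractible.

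To return to $X$, decompose it into short exact sequences $0\to Z^n\to I^n\to Z^{n+1}\to 0$. Each $DZ^n$ is a cycle module of the contractible complex $DX$, so it is a direct summand of some $P_j$ and hence projective. By the Lambek criterion $Z^n$ is injective, so each sequence splits and $X$ is contractible, i.e.\ zero in $\mathsf{K}^-_{\mathsf{ac}}(\Inj\Lambda^{\opp})$.

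\emph{($\Leftarrow$).} Argue by contraposition: assume $\Findim\Lambda=\infty$, and produce a non-contractible exact complex $0\to P_0\to P_1\to\cdots$ of projective $\Lambda$-modules whose dual $\mathsf{Hom}_k(-,E)$ over $\Lambda^{\opp}$ is bounded above, acyclic and non-contractible. This is the main obstacle, and the natural attempts fail. A single minimal resolution $0\to P_m\to\cdots\to P_0\to M\to 0$ with $\pd M=m$ large is non-split at its left end, but it stops at $M$. Taking products of such complexes, with homology placed in degrees tending to infinity, still leaves homology in every degree. I would first try to build the complex as an inverse or homotopy limit. The idea is to choose, for each $n$, a minimal complex $W_n$ that is a truncated resolution of length $n$ with non-split left end, together with compatible chain maps $W_{n+1}\to W_n$ that are surjective in low degrees. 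A Mittag-Leffler argument should then give an exact limit whose left end stays non-split; products of projectives remain projective, which keeps the limit inside $\Proj\Lambda$.

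If such compatible choices cannot be made directly, the fallback is Rickard's characterisation: $\Findim\Lambda<\infty$ when the injective $\Lambda^{\opp}$-modules generate, which was the input for \cite[Theorem 5.1]{shaul}. The plan there is to show that failure of generation yields a nonzero object of $\mathsf{K}_{\mathsf{ac}}(\Inj\Lambda^{\opp})$ that is orthogonal to the relevant compacts in high degrees. Lemma~\ref{lem: computation from KPV} identifies exactly such objects with $\mathsf{K}^-_{\mathsf{ac}}(\Inj\Lambda^{\opp})$, which would give a nonzero object there and complete the contrapositive.
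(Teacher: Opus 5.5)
Your argument for ($\Rightarrow$) is correct, and it takes a different route from the paper, which simply quotes Shaul's Theorem 5.1. You dualise to a bounded below exact complex of projective $\Lambda$-modules, use $\Findim\Lambda=d$ to force all its cycle modules to be projective, and return to $X$ with the Lambek criterion. This is self-contained and elementary. One small slip: the image of $P_{k-d-1}$ in $P_{k-d}$ is the $(d+1)$-st syzygy of $C_k$, not the $d$-th, but it is projective all the same.

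The genuine gap is ($\Leftarrow$), for which you give no proof, only two strategies.
\begin{itemize}
\item The inverse-limit plan has no mechanism for producing compatible maps $W_{n+1}\to W_n$ between truncated resolutions of unrelated modules. It also gives no control over non-contractibility of the limit.
\item The decisive step of the fallback is that failure of injective generation yields a nonzero object of $\mathsf{K}^-_{\mathsf{ac}}(\Inj\Lambda^{\opp})$. It is asserted without argument. In view of the very statement you are proving, it is equivalent to ``$\Findim\Lambda<\infty$ implies injective generation''. That is a converse of the Rickard-type implication you invoke, and it cannot be expected to come for free.
\end{itemize}

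The missing idea is Rickard's construction from the proof of his Theorem 4.3. It is exactly the ``natural attempt'' you discarded, plus one more step.
\begin{itemize}
\item Choose $\Lambda$-modules $M_i$ of finite, strictly increasing projective dimensions $d_i$, with minimal projective resolutions $P_i$. Let $C$ be the cone of the canonical map $\bigoplus_i P_i[-d_i]\to\prod_i P_i[-d_i]$.
\item Since $P_i[-d_i]$ has homology only in degree $d_i$ and the $d_i$ are distinct, this map is a quasi-isomorphism. So the homology you were worried about cancels in $C$, and $C$ is a bounded below acyclic complex of projectives (products of projectives are projective over an Artin algebra).
\item $C$ is not contractible. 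Otherwise the map would be a homotopy equivalence. Applying $\mathsf{Hom}_{\Lambda}(-,\Lambda/\mathsf{rad}\Lambda)$, which kills all differentials by minimality, would then give bijections $\mathsf{Hom}_{\Lambda}(\prod_i P_i^{n-d_i},\Lambda/\mathsf{rad}\Lambda)\to\mathsf{Hom}_{\Lambda}(\bigoplus_i P_i^{n-d_i},\Lambda/\mathsf{rad}\Lambda)$. These cannot be injective, because infinitely many factors are nonzero.
\item $D(C)$ is a bounded above acyclic complex of injective $\Lambda^{\opp}$-modules. It is not contractible by the classical Lambek criterion, of which you already use a variant. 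If it were contractible, the duals of the cycle modules of $C$ would be injective. Those cycle modules would then be flat, hence projective, and $C$ would split.
\end{itemize}
This is the argument the paper gives in Proposition~\ref{prop: findim via pure injectives}. The paper proves non-contractibility of $D(C)$ differently, via $C\otimes_{\Lambda}D(\Lambda)$ and the equivalence $\mathsf{K}(\Proj\Lambda)\simeq\mathsf{K}(\Inj\Lambda)$; your Lambek tool gives a shortcut there.
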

\begin{proof}[A comment] \renewcommand{\qedsymbol}{}
     Shaul shows in \cite[Theorem 5.1 c)]{shaul} that the above is true for any two-sided noetherian ring with a dualizing complex. An Artin algebra $\Lambda$ satisfies these assumptions (with $D(\Lambda)$ being the dualizing complex in this case). In a way we reprove the `hard' implication of Proposition \ref{prop: theorem of shaul} in Proposition \ref{prop: findim via pure injectives}, since we need a slightly stronger form.
\end{proof}

\begin{rem}
    By combining Lemma \ref{lem: computation from KPV} with Proposition \ref{prop: theorem of shaul}, it follows that the finiteness of $\Findim\Lambda$ is an intrinsic property of $\mathcal{T}=\mathsf{K}_{\mathsf{ac}}(\Inj\Lambda^{\opp})$ (namely measured by $\mathcal{T}^-=0)$; this is already new information about the finitistic dimension, but the point of this paper is to understand its `small' variant (i.e.\ via the ordinary singularity category), which requires some extra work.
\end{rem}

\subsection{Module categories} For any small triangulated category $\mathcal{D}$, \emph{the category of modules} over $\mathcal{D}$, denoted by $\mathsf{Mod}\mathcal{D}$, is the category of additive functors $\mathcal{D}^{\mathsf{op}}\rightarrow \mathsf{Ab}$. This is a Grothendieck category (see \cite[Chapter 5]{freyd}) and we use $\mathsf{Inj}\mathcal{D}$ to denote its injective objects. The suspension functor $[1]\colon\mathcal{D}\rightarrow\mathcal{D}$ extends (via Yoneda) to an equivalence 
\begin{equation} \label{shift of modules}
    \Sigma \colon \mathsf{Mod}\mathcal{D}\xrightarrow{\sim} \mathsf{Mod}\mathcal{D}
\end{equation}
given by $F\mapsto F\circ [-1]$.

\subsection{Purity} 

Let $\mathcal{A}$ be a locally finitely presented additive category \cite{Crawley-Boevey}, typically assumed to have products (for instance the category of (co)chain complexes over a ring). We denote by $\mathsf{fp}(\mathcal{A})$ its subcategory of finitely presented objects. A sequence of morphisms $0\to x\to y\to z\to 0$ in $\mathcal{A}$ is \emph{pure exact} if 
\[
0\rightarrow \mathsf{Hom}_{\mathcal{A}}(c,x)\rightarrow \mathsf{Hom}_{\mathcal{A}}(c,y)\rightarrow\mathsf{Hom}_{\mathcal{A}}(c,z)\rightarrow0
\]
is an exact sequence of abelian groups for any $c\in\mathsf{fp}(\mathcal{A})$. An object $q\in\mathcal{A}$ is \emph{pure-injective} if for any pure exact sequence as above, the induced homomorphism $\mathsf{Hom}_{\mathcal{A}}(y,q)\rightarrow \mathsf{Hom}_{\mathcal{A}}(x,q)$ is surjective. We will need the following characterisation of pure-injective objects, see \cite[Theorem 1]{Crawley-Boevey}. 

\begin{prop} \label{prop: pure injective in fp}
    An object $q$ in $\mathcal{A}$ is pure-injective if and only if for any set $I$, the summation map $\oplus_{I}q\to q$ factors through the canonical map $\oplus_{I}q\to \Pi_{I}q$.
\end{prop}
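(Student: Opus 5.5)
Proposition~\ref{prop: pure injective in fp} is Crawley-Boevey's characterisation, cited from \cite[Theorem 1]{Crawley-Boevey}, so the paper may simply quote it. If I were to prove it, I would go through the fully faithful embedding of $\mathcal{A}$ into the functor category $\mathsf{Mod}(\mathsf{fp}(\mathcal{A}))$, given by $x\mapsto \mathsf{Hom}_{\mathcal{A}}(-,x)|_{\mathsf{fp}(\mathcal{A})}$. Since $\mathcal{A}$ is locally finitely presented, this embedding identifies $\mathcal{A}$ with the flat functors. It sends pure exact sequences to exact sequences, and an object $q$ is pure-injective if and only if its image is injective in $\mathsf{Mod}(\mathsf{fp}(\mathcal{A}))$. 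I would check this last point using that the embedding preserves products and filtered colimits.

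\emph{Necessity.} I would first show that the canonical map $\oplus_I q\to\Pi_I q$ is a pure monomorphism. For $c\in\mathsf{fp}(\mathcal{A})$, $\mathsf{Hom}(c,-)$ commutes with $\oplus_I$, since a coproduct is a filtered colimit of finite coproducts. Hence $\mathsf{Hom}(c,\oplus_I q)=\oplus_I\mathsf{Hom}(c,q)\to\Pi_I\mathsf{Hom}(c,q)=\mathsf{Hom}(c,\Pi_I q)$ is injective. To obtain purity in the sense of the definition, one passes to the pure exact structure, i.e.\ one checks that the sequence $0\to\oplus_I q\to\Pi_I q\to\text{cokernel}\to0$ stays exact under $\mathsf{Hom}(c,-)$. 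This is easiest in the functor category: there the induced map is a monomorphism of flat functors whose cokernel is flat, because flat functors are closed under products in this locally coherent setting. Pure-injectivity of $q$ then lets the summation map $\oplus_I q\to q$ extend along this map, which is the required factorisation.

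\emph{Sufficiency.} This is the main obstacle. I would use the Jensen--Lenzing style argument. Let $\hat q$ be the image of $q$ in $\mathsf{Mod}(\mathsf{fp}(\mathcal{A}))$. To show $\hat q$ is injective, it suffices by Baer's criterion for Grothendieck categories with a generating set of finitely generated projectives to extend maps from finitely generated subobjects $U\subseteq \mathsf{Hom}(-,c)$ with $c\in\mathsf{fp}(\mathcal{A})$. Since $U$ is a sum of representable images, maps $U\to\hat q$ are given by compatible families of elements of $q$ evaluated on finitely presented objects. I would then use the factorisation hypothesis, with $I$ ranging over all the relevant maps, to glue these data. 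Concretely, take the family of all maps $\phi_i\colon c_i\to q$ and form $\oplus \phi_i$ and $\Pi\phi_i$. Obtaining a map $\Pi_I q\to q$ that restricts to summation lets one solve the extension problem coordinatewise, as in the proof that $\Sigma$-algebraic compactness implies pure-injectivity. Verifying that the resulting extension is well defined on $U$ is where the care lies. I expect the cleanest path is to reduce to showing that $q$ is algebraically compact, i.e.\ that every finitely solvable system of linear equations over $\mathcal{A}$ in $q$ is solvable, and then to invoke the equivalence between algebraic compactness and pure-injectivity.
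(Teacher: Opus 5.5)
Citing \cite[Theorem 1]{Crawley-Boevey}, as you suggest, is exactly what the paper does. It gives no argument of its own and only ever uses the implication `pure-injective $\Rightarrow$ factorisation' (in Lemma~\ref{lem: dual are pure-injective}).

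Your necessity argument is essentially sound, but the reason you give for flatness of the cokernel does not work. Closure of flat functors under products does not by itself make the cokernel of $\oplus_I\hat{q}\to\Pi_I\hat{q}$ flat; moreover, that closure comes from $\mathcal{A}$ having products, not from local coherence. What does work is this: the sequence $0\to\oplus_I q\to\Pi_I q\to z\to 0$, with $z=\varinjlim_F\Pi_{I\setminus F}q$, is the filtered colimit over finite $F\subseteq I$ of the split exact sequences $0\to\oplus_F q\to\Pi_I q\to\Pi_{I\setminus F}q\to 0$. Since $\mathsf{Hom}_{\mathcal{A}}(c,-)$ commutes with filtered colimits for $c\in\mathsf{fp}(\mathcal{A})$, the sequence is pure exact.

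The genuine gap is in sufficiency. It rests on your claim that $q$ is pure-injective if and only if $\hat{q}=\mathsf{Hom}_{\mathcal{A}}(-,q)|_{\mathsf{fp}(\mathcal{A})}$ is injective in $\mathsf{Mod}(\mathsf{fp}(\mathcal{A}))$. This is the analogue of (\ref{equivalence}), which holds for compactly generated triangulated categories because there flat functors on $\mathcal{T}^{\mathsf{c}}$ are fp-injective. For a general locally finitely presented $\mathcal{A}$ it is false.

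Take $\mathcal{A}=\mathsf{Mod}\,\mathbb{Z}$ and $q=\mathbb{Z}/p$, which is pure-injective since it is finite. The map $\mathsf{Hom}(-,\mathbb{Z})\xrightarrow{p}\mathsf{Hom}(-,\mathbb{Z})$ is a monomorphism of functors on finitely generated abelian groups. The morphism $\mathsf{Hom}(-,\mathbb{Z})\to\hat{q}$ given by $1\in\mathbb{Z}/p$ does not extend along it, because an extension would be given by some $m\in\mathbb{Z}/p$ with $pm=1$. More generally, for $\mathcal{A}=\mathsf{Mod}R$ one checks that $\hat{M}$ is injective exactly when $M$ is injective.

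The underlying issue is that pure-injectivity only gives extensions along monomorphisms of flat functors with flat cokernel. Baer's criterion tests inclusions $U\subseteq\mathsf{Hom}(-,c)$ whose cokernels are typically not flat. So your Baer-criterion plan is aimed at a false conclusion and cannot be completed.

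The embedding does transport the problem correctly if you aim instead for \emph{pure}-injectivity of $\hat{q}$ in the locally finitely presented Grothendieck category $\mathsf{Mod}(\mathsf{fp}(\mathcal{A}))$. The embedding is fully faithful and preserves coproducts and products, so $\hat{q}$ inherits the factorisation property. Pure exact sequences of $\mathcal{A}$ stay pure exact there, because their third term is flat. But then you face the same statement for modules over $\mathsf{fp}(\mathcal{A})$, namely the classical module-theoretic criterion of Jensen--Lenzing. Its hard direction is turning an extension $\Pi_I q\to q$ of the summation map into solutions of finitely solvable systems, or into retractions of pure monomorphisms. That is exactly the step your closing paragraph names but does not carry out.
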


We now recall the analogous notions for a compactly generated triangulated category $\mathcal{T}$. A morphism $x\to y$ in $\mathcal{T}$ is a \emph{pure monomorphism} if for every compact object $c\in\mathcal{T}$, the induced homomorphism $\mathsf{Hom}_{\mathcal{T}}(c,x)\rightarrow\mathsf{Hom}_{\mathcal{T}}(c,y)$ is a monomorphism. Then, an object $x$ of $\mathcal{T}$ is called \emph{pure-injective} if every pure monomorphism $x\to y$ splits. The following is proved in \cite[Theorem 1.8]{krause}. 
\begin{prop} \label{prop: pure injective in cg}
    An object $x$ in $\mathcal{T}$ is pure-injective if and only if for any set $I$, the summation map $\oplus_{I}x\to x$ factors through the canonical map $\oplus_{I}x\to \Pi_{I}x$. 
\end{prop}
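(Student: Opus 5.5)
The plan is to transfer the problem along the restricted Yoneda functor
\[
h\colon \mathcal{T}\longrightarrow \mathsf{Mod}\,\mathcal{T}^{\mathsf{c}},\qquad x\mapsto \mathsf{Hom}_{\mathcal{T}}(-,x)|_{\mathcal{T}^{\mathsf{c}}},
\]
and then apply Proposition~\ref{prop: pure injective in fp} in the locally finitely presented category $\mathcal{A}=\mathsf{Mod}\,\mathcal{T}^{\mathsf{c}}$. I would first record the formal properties of $h$.
\begin{enumerate}
\item[(a)] $h$ preserves coproducts, because objects of $\mathcal{T}^{\mathsf{c}}$ are compact.
\item[(b)] $h$ preserves products, by the universal property of the product.
\item[(c)] A morphism $f$ in $\mathcal{T}$ is a pure monomorphism exactly when $h(f)$ is a monomorphism. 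This is just the definition.
\item[(d)] Since $\mathcal{T}^{\mathsf{c}}$ is triangulated, every finitely presented $\mathcal{T}^{\mathsf{c}}$-module is fp-injective (Freyd). Hence every object of $\mathsf{Mod}\,\mathcal{T}^{\mathsf{c}}$ is fp-injective, so the pure-injective objects of $\mathsf{Mod}\,\mathcal{T}^{\mathsf{c}}$ are precisely the injective ones.
\end{enumerate}

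The central step is the following key lemma: if $h(x)$ is injective, then for every $y\in\mathcal{T}$ the map
\[
\mathsf{Hom}_{\mathcal{T}}(y,x)\to \mathsf{Hom}(h(y),h(x))
\]
is bijective. To prove it, I would take a pure epimorphism $\bigoplus_i c_i\to y$ with $c_i$ compact, whose image under $h$ is an epimorphism. Completing it to a triangle $y'\to\bigoplus c_i\to y\to y'[1]$, the connecting map $y\to y'[1]$ is killed by $h$. Yoneda gives the bijection on $\bigoplus c_i$. A diagram chase with the exact sequence $0\to h(y')\to h(\bigoplus c_i)\to h(y)\to 0$, together with injectivity of $h(x)$, then gives surjectivity. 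Injectivity of the map is obtained by repeating the argument one step further, where a map $y\to x$ killed by $h$ factors through the phantom $y\to y'[1]$. I expect this lemma to be the main obstacle; it is essentially Krause's argument.

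With the key lemma, I would first show that $x$ is pure-injective if and only if $h(x)$ is injective.
\begin{itemize}
\item[($\Leftarrow$)] A pure mono $x\to y$ becomes a mono $h(x)\to h(y)$, which splits since $h(x)$ is injective. The key lemma lifts the retraction to $\mathcal{T}$, and faithfulness on maps into $x$ shows that it is a retraction there.
\item[($\Rightarrow$)] Embed $h(x)$ into an injective $E$. By Brown representability for the cohomological functor $\mathsf{Hom}(h(-),E)$, we have $E\cong h(e)$ for some $e$. The key lemma then produces a map $x\to e$ with $h$-image the embedding, so this map is a pure mono. It splits, so $h(x)$ is a summand of $h(e)=E$ and hence injective.
\end{itemize}

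Finally, the statement follows.
\begin{itemize}
\item[($\Rightarrow$)] If $x$ is pure-injective, then $h(x)$ is injective, hence pure-injective. By Proposition~\ref{prop: pure injective in fp} and (a), (b), the summation map $\bigoplus h(x)\to h(x)$ factors through $h(\bigoplus x)\to h(\prod x)$. The key lemma lifts the factorisation $h(\prod x)\to h(x)$ to $\mathcal{T}$, and faithfulness shows that the composite with $\bigoplus x\to\prod x$ is the summation map.
\item[($\Leftarrow$)] Conversely, applying $h$ to a factorisation in $\mathcal{T}$ shows, via Proposition~\ref{prop: pure injective in fp} and (a), (b), that $h(x)$ is pure-injective. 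By (d) it is then injective, so $x$ is pure-injective by the equivalence just proved.
\end{itemize}
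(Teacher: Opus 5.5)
Your overall strategy is the one behind \cite[Theorem 1.8]{krause}, which is all the paper invokes for this statement, and your outer steps are fine. However, your proof of the key lemma does not go through.

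For injectivity: if $g\colon y\to x$ has $h(g)=0$, you get $g=g'\circ\varphi$ with $\varphi\colon y\to y'[1]$ the connecting map. Injectivity of $h(x)$ then only lets you extend $h(g')$ along the monomorphism $h(y'[1])\to h((\bigoplus_i c_i)[1])$, i.e.\ replace $g'$ by $g'-\beta\circ j[1]$, which is again killed by $h$. So going ``one step further'' merely writes $g$ as a composite of two maps killed by $h$. In four-lemma form, faithfulness at $y$ is reduced to faithfulness at $y'[1]$, then at the next syzygy, and so on; the chase never terminates with $g=0$.

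Your surjectivity chase has the same defect. It produces $\beta\colon\bigoplus_i c_i\to x$ with $h(\beta\circ j)=0$, and it is faithfulness at $y'$, not injectivity of $h(x)$, that would give $\beta\circ j=0$.

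You can see that an essential ingredient is missing because your chase never uses that $\mathcal{T}^{\mathsf{c}}$ generates $\mathcal{T}$. Without generation the lemma is false: any $0\neq x\in(\mathcal{T}^{\mathsf{c}})^{\perp}$ has $h(x)=0$ injective, but $\mathrm{id}_x\neq 0$.

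The repair uses the Brown representability you already invoke afterwards. Pick $e$ with a natural isomorphism $\theta\colon\mathsf{Hom}_{\mathcal{T}}(-,e)\cong\mathsf{Hom}(h(-),h(x))$, and let $f\colon x\to e$ correspond to $\mathrm{id}_{h(x)}$. Naturality gives $\theta_y(f\circ g)=h(g)$ for every $g\colon y\to x$. For compact $y$ this shows that $h(f)$ is an isomorphism, so $\mathsf{cone}(f)\in(\mathcal{T}^{\mathsf{c}})^{\perp}=0$ and $f$ is invertible. Hence $g\mapsto h(g)$ is the composite of the bijections $f\circ-$ and $\theta_y$.

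Separately, (d) is false as stated. $\mathsf{mod}\,\mathcal{T}^{\mathsf{c}}$ is a Frobenius abelian category whose injective objects are the representable functors. So a finitely presented module that is not representable is not fp-injective, and in general far from every object of $\mathsf{Mod}\,\mathcal{T}^{\mathsf{c}}$ is fp-injective. What your last step actually needs is only that $h(x)$ is fp-injective, and this holds because every cohomological functor $M$ is fp-injective. Let $F$ be the cokernel of $\mathsf{Hom}(-,a)\to\mathsf{Hom}(-,b)$ for a morphism $a\to b$ in $\mathcal{T}^{\mathsf{c}}$, completed to a triangle $w[-1]\to a\to b\to w$. Then $\mathsf{Ext}^1(F,M)$ is the homology of $M(b)\to M(a)\to M(w[-1])$, which is exact. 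Since every monomorphism out of an fp-injective object is pure, a pure-injective $h(x)$ is injective. With this correction and the repaired key lemma, your argument is complete.
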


We now consider the \emph{restricted Yoneda functor} $\mathcal{T}\rightarrow \mathsf{Mod}\mathcal{T}^{\mathsf{c}}$, which is given by $t\mapsto H_t=\mathsf{Hom}_{\mathcal{T}}(-,t)|_{\mathcal{T}^{\mathsf{c}}}$. We know from \cite[Corollary 1.9]{krause} that the latter yields an equivalence
\begin{equation} \label{equivalence}
    \mathsf{PInj}\mathcal{T}\simeq \mathsf{Inj}\mathcal{T}^{\mathsf{c}}
\end{equation}
between the pure-injective objects of $\mathcal{T}$ and the injective objects of $\mathsf{Mod}\mathcal{T}^{\mathsf{c}}$. We will identify $\mathcal{T}^{\mathsf{c}}$ with a full subcategory $Q$ of $\mathsf{Mod}\mathcal{T}^{\mathsf{c}}$. The following idea is found in \cite[Appendix B]{krause fin dim} and is essential to this paper.

\begin{lem} \label{lem: bounded above pure}
    The restricted Yoneda functor $\mathcal{T}\rightarrow \mathsf{Mod}\mathcal{T}^{\mathsf{c}}$ identifies the pure-injective objects of $\mathcal{T}$ that are contained in $\mathcal{T}^-$ with $Q^{\perp_{\gg}}\cap \mathsf{Inj}\mathcal{T}^{\mathsf{c}}\subseteq\mathsf{Mod}\mathcal{T}^{\mathsf{c}}$.
\end{lem}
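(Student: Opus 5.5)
By \eqref{equivalence}, the restricted Yoneda functor $t\mapsto H_t$ already gives an equivalence between $\mathsf{PInj}\mathcal{T}$ and $\mathsf{Inj}\mathcal{T}^{\mathsf{c}}$. So the plan is to show one thing: for a pure-injective $x\in\mathcal{T}$, we have $x\in\mathcal{T}^-$ if and only if $H_x\in Q^{\perp_{\gg}}$, where the latter is computed in $\mathsf{Mod}\mathcal{T}^{\mathsf{c}}$ with respect to the autoequivalence $\Sigma$ of \eqref{shift of modules}. This condition is in fact independent of pure-injectivity: I will prove that $t\in\mathcal{T}^-$ if and only if $H_t\in Q^{\perp_{\gg}}$ for every $t\in\mathcal{T}$. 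Intersecting both sides with the pure-injectives, and using \eqref{equivalence}, then yields the lemma.

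The key computation is a Yoneda-type identification. For $c\in\mathcal{T}^{\mathsf{c}}$, write $h_c=\mathsf{Hom}_{\mathcal{T}^{\mathsf{c}}}(-,c)$; this is the object of $Q$ corresponding to $c$, and it coincides with $H_c$. First I check that $\Sigma$ is compatible with restricted Yoneda, that is, $\Sigma H_t\cong H_{t[1]}$ naturally in $t$. This holds because
\[
(\Sigma H_t)(d)=H_t(d[-1])=\mathsf{Hom}_{\mathcal{T}}(d[-1],t)\cong\mathsf{Hom}_{\mathcal{T}}(d,t[1]).
\]
Iterating gives $\Sigma^nH_t\cong H_{t[n]}$. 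The Yoneda lemma in $\mathsf{Mod}\mathcal{T}^{\mathsf{c}}$ then gives
\[
\mathsf{Hom}_{\mathsf{Mod}\mathcal{T}^{\mathsf{c}}}(h_c,\Sigma^nH_t)\cong(\Sigma^nH_t)(c)\cong\mathsf{Hom}_{\mathcal{T}}(c,t[n]).
\]
Consequently, $\mathsf{Hom}(h_c,\Sigma^nH_t)=0$ for $n\gg0$ if and only if $\mathsf{Hom}_{\mathcal{T}}(c,t[n])=0$ for $n\gg0$. Since $c$ ranges over all of $\mathcal{T}^{\mathsf{c}}$ exactly when $h_c$ ranges over all of $Q$, the definitions of $Q^{\perp_{\gg}}$ and $\mathcal{T}^-=(\mathcal{T}^{\mathsf{c}})^{\perp_{\gg}}$ match.

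For the final assembly, take an injective $E\in Q^{\perp_{\gg}}\cap\mathsf{Inj}\mathcal{T}^{\mathsf{c}}$. By \eqref{equivalence}, $E\cong H_x$ for some pure-injective $x$, and the equivalence above puts $x$ in $\mathcal{T}^-$. In the other direction, for a pure-injective $x\in\mathcal{T}^-$, the module $H_x$ is injective by \eqref{equivalence} and lies in $Q^{\perp_{\gg}}$ by the computation. Since $Q^{\perp_{\gg}}$ is defined by a Hom-vanishing condition, it is closed under isomorphism, so "identifies" here means an equivalence of full subcategories up to isomorphism.

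There is no serious obstacle. The only point requiring care is the sign convention for $\Sigma$: $F\mapsto F\circ[-1]$ must correspond to $t\mapsto t[1]$ and not to $t[-1]$. The computation above confirms it does; if the convention were reversed, the condition would describe bounded-below objects instead.
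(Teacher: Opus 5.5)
Your proposal is correct and follows the paper's proof essentially step for step. You establish $\Sigma^nH_t\cong H_{t[n]}$, use Yoneda to get $\mathsf{Hom}(H_c,\Sigma^nH_t)\cong\mathsf{Hom}_{\mathcal{T}}(c,t[n])$, conclude that $t\in\mathcal{T}^-$ if and only if $H_t\in Q^{\perp_{\gg}}$ for every $t$, and then restrict the equivalence \eqref{equivalence}; your explicit check of the sign convention for $\Sigma$ is a detail the paper states without comment.
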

\begin{proof}
    For any object $t\in\mathcal{T}$ and any integer $n$, we have  $H_{t[n]}\cong H_t\circ[-n]\cong \Sigma^nH_t$ in $\Mod\mathcal{T}^{\mathsf{c}}$ and therefore for any compact object $c\in\mathcal{T}^{\mathsf{c}}$, it holds that
    \[
    \mathsf{Hom}_{\mathcal{T}}(c,t[n]) \cong  \mathsf{Hom}_{\Mod \mathcal{T}^{\mathsf{c}}}(H_c,H_{t[n]}) \cong \mathsf{Hom}_{\Mod\mathcal{T}^{\mathsf{c}}}(H_c,\Sigma^n H_t).
    \]
    Hence we see that $t$ belongs to $\mathcal{T}^-$ if and only if $H_t$ belongs to $Q^{\perp_{\gg}}\subseteq \Mod\mathcal{T}^{\mathsf{c}}$. It follows that the equivalence  $\mathsf{PInj}\mathcal{T}\simeq \Inj\mathcal{T}^{\mathsf{c}}$, induced by the restricted Yoneda functor (\ref{equivalence}), restricts to an equivalence $\mathcal{T}^-\cap \mathsf{PInj}\mathcal{T}\simeq Q^{\perp_{\gg}}\cap \Inj\mathcal{T}^{\mathsf{c}}$, as claimed. 
\end{proof}

\begin{lem} \label{lem: dual are pure-injective}
    For any complex $x$ of $\Lambda$-modules, the complex $D(x)$ is a pure-injective object of $\mathsf{C}(\Lambda^{\opp})$. In particular when $x$ is a complex of projective $\Lambda$-modules, then $D(x)$ is a pure-injective object of $\mathsf{K}(\Inj\Lambda^{\opp})$.
\end{lem}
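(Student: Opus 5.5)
We use the summation-map criteria of Propositions~\ref{prop: pure injective in fp} and~\ref{prop: pure injective in cg}, plus the fact that $D$ turns coproducts into products. For the first claim, let $x\in\mathsf{C}(\Lambda)$ and $I$ a set. Since $D=\mathsf{Hom}_k(-,E)$ is applied degreewise, $D(\oplus_I x)\cong \Pi_I D(x)$ naturally. The codiagonal (summation) map $\nabla\colon\oplus_I x\to x$ is defined, and the diagonal $\Delta\colon x\to \Pi_I x$ exists because $\mathsf{C}(\Lambda)$ has products. Their composite $\oplus_I x\to x\to \Pi_I x$ is the canonical map. Dualising $\Delta$ and composing with the canonical map $\oplus_I D(x)\to \Pi_I D(x)\cong D(\oplus_I x)$ gives a morphism $D(\Delta)\colon D(\Pi_I x)\to D(x)$. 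I then check that
\[
\oplus_I D(x)\to D(\Pi_I x)\xrightarrow{D(\Delta)} D(x)
\]
is the summation map, where the first arrow is induced by the projections $\Pi_I x\to x$. This is a degreewise calculation: on the $i$-th summand, it is $D$ of the composite $x\to\Pi_I x\to x$ (the $i$-th projection after the diagonal), which is the identity. Next I check that this first arrow factors through $\oplus_I D(x)\to \Pi_I D(x)$, namely through $D(\oplus_I x)\cong \Pi_I D(x)$ followed by $D$ of the canonical inclusion. Concretely, the required factorisation of the summation map through $\oplus_I D(x)\to \Pi_I D(x)$ is
\[
\Pi_I D(x)\cong D(\oplus_I x)\xrightarrow{D(\iota)} D(x),
\]
where $\iota\colon x\to\oplus_I x$ is the diagonal map into the coproduct. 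That map does not exist for infinite $I$, so the correct route goes through the double dual. The cleanest path is to use that $D(x)$ is a direct summand of $D(\Pi_I\text{-something})$. Alternatively I would simply use the standard fact that $D(\oplus_I x)\cong\Pi_I D(x)$, and that $\mathsf{Hom}(-,D(x))\cong D(x\otimes -)$, i.e.\ pure exact sequences in $\mathsf{C}(\Lambda^{\opp})$ dualise to split sequences. Fixing this bookkeeping is the main obstacle. The plan I commit to is the adjunction route:
\[
\mathsf{Hom}_{\mathsf{C}(\Lambda^{\opp})}(y,D(x))\cong \mathsf{Hom}_k\bigl((y\otimes_\Lambda x)_{\text{suitable}},E\bigr),
\]
where the suitable tensor is a chain-map-compatible tensor taken in the category of complexes. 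A pure exact sequence in $\mathsf{C}(\Lambda^{\opp})$ is a filtered colimit of split sequences, so it stays exact after tensoring, and exactness of $\mathsf{Hom}_k(-,E)$ then gives the surjectivity in the definition of pure-injectivity.

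For the second claim, let $x$ be a complex of projectives, so $D(x)$ is a complex of injective $\Lambda^{\opp}$-modules. Since the left-hand side of the criterion in Proposition~\ref{prop: pure injective in cg} is invariant under passing to $\mathsf{K}(\Inj\Lambda^{\opp})$, we want to transfer the factorisation. Coproducts in $\mathsf{K}(\Inj\Lambda^{\opp})$ agree with those in $\mathsf{C}(\Lambda^{\opp})$, because coproducts of injectives over a noetherian ring are injective. Products also agree, since products of injectives are injective and degreewise products give homotopy products. Hence the factorisation obtained in $\mathsf{C}(\Lambda^{\opp})$ from Proposition~\ref{prop: pure injective in fp} maps to a factorisation in $\mathsf{K}(\Inj\Lambda^{\opp})$. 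By Proposition~\ref{prop: pure injective in cg}, which applies since $\mathsf{K}(\Inj\Lambda^{\opp})$ is compactly generated by Lemma~\ref{lem: big sing is cg}, $D(x)$ is pure-injective there.
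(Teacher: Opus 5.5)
Your argument is correct. For the second claim it is the paper's argument. $D(x)$ is a complex of injectives, and degreewise coproducts and products of such complexes are again complexes of injectives (as $\Lambda^{\opp}$ is noetherian). They are also coproducts and products in $\mathsf{K}(\Inj\Lambda^{\opp})$. So the factorisation of the summation map obtained in $\mathsf{C}(\Lambda^{\opp})$ from Proposition~\ref{prop: pure injective in fp} descends, and Proposition~\ref{prop: pure injective in cg} applies.

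The genuine difference is the first claim. The paper does not prove it; it cites it as well known, via the proof of Krause's Lemma~B.3. You derive it from the definition of pure-injectivity, using a Hom--tensor adjunction together with Crawley-Boevey's description of pure exact sequences in a locally finitely presented category as filtered colimits of split exact ones. This makes the lemma self-contained, at the price of some bookkeeping, which you should carry out rather than invoke a ``suitable tensor''. Concretely, chain maps are the degree-zero cocycles of the Hom complex, so, naturally in $y$,
\[
\mathsf{Hom}_{\mathsf{C}(\Lambda^{\opp})}(y,D(x))\cong Z^0\,\mathcal{H}om_{\Lambda^{\opp}}(y,D(x))\cong Z^0 D(x\otimes_\Lambda y)\cong D\bigl(\mathrm{Coker}(d^{-1}_{x\otimes_\Lambda y})\bigr).
\]
The functor $y\mapsto \mathrm{Coker}(d^{-1}_{x\otimes_\Lambda y})$ is additive and commutes with filtered colimits. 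These two properties are exactly what send a filtered colimit of split exact sequences to a short exact sequence of $k$-modules. Injectivity on the left is the real point here, since tensoring alone is only right exact. Exactness of $D$ then gives the required surjectivity.

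In a write-up, drop the abandoned first attempt. As you noticed, the route through $D(\Pi_I x)$ stalls because there is no canonical map $\Pi_I x\to\oplus_I x$ to dualise, and the sentences about $D(\Delta)$ and the non-existent $\iota$ are garbled.
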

\begin{proof}
    That  $D(x)$ is a pure-injective object in $\mathsf{C}(\Lambda^{\opp})$ is a well-known statement; see for instance the proof of \cite[Lemma B.3]{krause fin dim}. It follows by Proposition \ref{prop: pure injective in fp} that for any set $I$ the summation map $\oplus_ID(x)\to D(x)$ factors through $\oplus_ID(x)\to \Pi_{I}D(x)$. When $x$ is a complex of projective $\Lambda$-modules, then $D(x)$ is an object of $\mathsf{K}(\Inj\Lambda^{\opp})$ and it is pure-injective by the above together with Proposition \ref{prop: pure injective in cg} and the fact that $\mathsf{K}(\Inj\Lambda^{\opp})$ is compactly generated. 
\end{proof}
\vspace{0.1cm}
\begin{center}
    * \ \ \ \ \  * \ \ \ \ \  * 
\end{center}
The following is the key tool for the proof of Theorem \ref{thmA}. 

\begin{prop} \label{prop: findim via pure injectives}
    It holds that $\Findim\Lambda<\infty$ if and only if 
    \[
    \mathcal{T}^{-}\cap \mathsf{PInj}\mathcal{T}=0
    \]
    where $\mathcal{T}=\mathsf{K}_{\mathsf{ac}}(\Inj\Lambda^{\opp})$.
\end{prop}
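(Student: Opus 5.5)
One direction is immediate. If $\Findim\Lambda<\infty$, then Proposition~\ref{prop: theorem of shaul} gives $\mathsf{K}^-_{\mathsf{ac}}(\Inj\Lambda^{\opp})=0$. By Lemma~\ref{lem: computation from KPV} this subcategory is exactly $\mathcal{T}^-$, so $\mathcal{T}^-\cap\mathsf{PInj}\mathcal{T}=0$. The substance is the converse. There we must show that $\Findim\Lambda=\infty$ produces a \emph{nonzero pure-injective} bounded above acyclic complex of injective $\Lambda^{\opp}$-modules. The proposition is slightly stronger than the hard direction of Proposition~\ref{prop: theorem of shaul}, since it also asks for pure-injectivity. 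Lemma~\ref{lem: dual are pure-injective} suggests building the object as $D(p)$ for a suitable complex $p$ of projective $\Lambda$-modules.

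The plan for the converse runs as follows. Suppose $\Findim\Lambda=\infty$. For each $n$, choose a module $x_n$ with $n\le\pd x_n<\infty$. Take finite projective resolutions $p_n$ of $x_n$, of length exactly $\pd x_n$, and shift them so the top nonzero degree sits at $0$. The syzygy in the last step is then a nonzero projective $P_n$. Each $p_n$ becomes a bounded complex of projectives that is exact except at one end, with cohomology $x_n$ placed far to the right.

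The next step is to form $p=\prod_n p_n$, or $\bigoplus_n$; products of projectives over an Artin algebra need not be projective, so the coproduct is the safer choice. Since products and coproducts of complexes are computed degreewise, the cohomology of $p$ in degree $i$ involves only those $x_n$ whose resolution ends there. To get an acyclic complex, I would instead align the resolutions so that they all have top degree $0$ and bottom degree $-\pd x_n$. The aim is then a limiting acyclic complex, bounded in one direction, of projectives. A cleaner route is Rickard's idea \cite{rickard}: $\Findim\Lambda=\infty$ forces $\mathsf{K}(\Proj\Lambda)$ to contain a nonzero acyclic complex $p$ of projectives that is bounded \emph{below}, in cohomological degrees. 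This is dual to Shaul's statement, which for $\Lambda$ uses that $\Findim\Lambda<\infty$ iff $D\Lambda$ generates, and that yields a nonzero bounded-below acyclic complex in $\mathsf{K}(\Proj\Lambda)$, or at least in $\mathsf{K}(\Inj\Lambda)$, which we then convert. Given such a $p$, the complex $D(p)$ is a bounded above complex of injective $\Lambda^{\opp}$-modules. It is acyclic because $D$ is exact, and nonzero in the homotopy category because $D$ is faithful on $\mathsf{K}$ and detects contractibility. Lemma~\ref{lem: dual are pure-injective} makes it pure-injective in $\mathsf{K}(\Inj\Lambda^{\opp})$. Since $\mathcal{T}$ is a localizing subcategory closed under products, it is also pure-injective in $\mathcal{T}$, via Proposition~\ref{prop: pure injective in cg} applied in both categories, whose products and coproducts agree. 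Finally, Lemma~\ref{lem: computation from KPV} places it in $\mathcal{T}^-$.

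The main obstacle is producing the nonzero bounded-below acyclic complex of projective $\Lambda$-modules from $\Findim\Lambda=\infty$, rather than just a complex of injectives as in Shaul. First I would try the direct construction: splice the truncated resolutions $p_n$, take their coproduct, and pass to a homotopy colimit or use Rickard's criterion that $\Findim\Lambda<\infty$ iff the injectives generate $\mathsf{K}(\Inj\Lambda)$, or dually a statement in $\mathsf{K}(\Proj\Lambda)$. If that fails, the fallback is to take Shaul's nonzero object $y\in\mathsf{K}^-_{\mathsf{ac}}(\Inj\Lambda^{\opp})$ and replace it by its pure-injective envelope $y\to D^2y$. The double dual is pure-injective, and $D^2$ preserves bounded-aboveness, acyclicity and injectivity of terms over an Artin algebra, since $D^2$ of an injective is a summand of a product of copies of $D\Lambda$. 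The map $y\to D^2y$ is a degreewise split monomorphism, hence nonzero in the homotopy category. This fallback seems the most robust route, and I would pursue it if the projective construction proves awkward.
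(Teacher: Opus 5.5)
Your architecture matches the paper's. The easy direction goes via Proposition~\ref{prop: theorem of shaul} and Lemma~\ref{lem: computation from KPV}. For the converse you take the dual $D(C)$ of Rickard's bounded below, acyclic, non-contractible complex of projectives $C=\mathsf{cone}\bigl(\bigoplus_i P_i[-d_i]\to\prod_i P_i[-d_i]\bigr)$, which is pure-injective by Lemma~\ref{lem: dual are pure-injective}.

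The gap is the one step that needs real work: showing that $D(C)$ is nonzero in $\mathsf{K}(\Inj\Lambda^{\opp})$. You justify it by saying $D$ ``is faithful on $\mathsf{K}$ and detects contractibility'', and this is false as stated. The cycle sequences of $D(x)$ are the duals of those of $x$, and a short exact sequence is pure exact iff its dual splits. Hence $D(x)$ is contractible exactly when $x$ is pure acyclic. Over an Artin algebra of infinite representation type, a non-split pure exact sequence, read as a complex, is non-contractible but killed by $D$.

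For complexes of projectives the conclusion is true, but it has to be proved. The paper argues as follows. If $D(C)\simeq 0$, then $D(C\otimes_\Lambda x)\cong\mathcal{H}om_{\Lambda^{\opp}}(x,D(C))$ is acyclic, so $C\otimes_\Lambda x$ is acyclic for every complex $x$. But $C\otimes_\Lambda D(\Lambda)$ is non-contractible, because $-\otimes_\Lambda D(\Lambda)\colon\mathsf{K}(\Proj\Lambda)\to\mathsf{K}(\Inj\Lambda)$ is an equivalence. It is also a bounded below complex of injectives, hence not acyclic. A different fix is available: a pure acyclic complex of projectives has flat cycles, which are projective since $\Lambda$ is perfect, so the complex is contractible.

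Your claim that products of projectives over an Artin algebra need not be projective is also wrong. Chase's theorem applies since $\Lambda$ is perfect and noetherian, and Rickard's construction uses exactly this product. Your splicing/coproduct sketch produces no acyclic complex, so only the citation of Rickard carries that step.

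Your fallback has the same hole in a sharper form. The inference `$y\to D^2y$ is a degreewise split monomorphism, hence nonzero in the homotopy category' is invalid. For example, the canonical inclusion $y\to\mathsf{cone}(1_y)$ is a degreewise split monomorphism into a contractible complex.

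To get $D^2y\not\simeq 0$ you could argue as follows:
\begin{itemize}
\item $Dy$ is a direct summand of $D^3y$ in the category of complexes, by the triangle identity for $D$. So $D^2y\simeq 0$ forces $Dy\simeq 0$, i.e.\ $y$ is pure acyclic.
\item Since $\Lambda^{\opp}$ is noetherian, a pure submodule of an injective module is FP-injective, hence injective.
\item So the cycle sequences of $y$ split and $y\simeq 0$, a contradiction.
\end{itemize}

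With this repair the fallback is a valid and genuinely different route. It takes the hard direction of Proposition~\ref{prop: theorem of shaul} as a black box and upgrades Shaul's object to a pure-injective one by double duality. Lemma~\ref{lem: dual are pure-injective} applies to $x=Dy$, which is a complex of flat, hence projective, $\Lambda$-modules. The paper instead deliberately reproves that hard direction in the stronger, pure-injective form.
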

\begin{proof}
    We know from Lemma \ref{lem: computation from KPV} that $\mathcal{T}^-=\mathsf{K}^-_{\mathsf{ac}}(\Inj \Lambda^{\opp})$.
    Therefore, if we assume that $\Findim\Lambda<\infty$, then the equality $\mathcal{T}^{-}=0$ is a consequence of Proposition~\ref{prop: theorem of shaul}. Assume now that $\mathcal{T}^-\cap \mathsf{PInj\mathcal{T}}=0$ and for the sake of contradiction that $\Lambda$ has infinite finitistic dimension. We follow a construction from \cite[Theorem 4.3]{rickard} (where the arguments are given for finite dimensional algebras over a field, but hold verbatim in our setup too); since it holds that $\Findim\Lambda=\infty$, we may find a sequence of modules $M_i$ with finite and strictly increasing projective dimension. We consider $P_i$ to be a minimal projective resolution of $M_i$ and we let $C$ be the cone of the canonical inclusion 
    \[
    \oplus P_i[-d_i]\to \Pi P_i[-d_i],
    \]
    where $d_i$ is the projective dimension of $M_i$. By \emph{loc.\ cit.}, the complex $C$ is a bounded below acyclic complex of projective $\Lambda$-modules that is not contractible. Therefore, the complex $D(C)$ is a bounded above complex of injective $\Lambda^{\opp}$-modules that is acyclic. We claim that it is not contractible; indeed, if it were, then $C\otimes_{\Lambda}x$ would be acyclic for any  complex $x$ of $\Lambda^{\opp}$-modules, since it holds that 
    \[
    D(C\otimes_{\Lambda}x)\cong \mathcal{H}om_{\Lambda^{\opp}}(x,D(C)) 
    \]
    and 
    \[
    H^n(\mathcal{H}om_{\Lambda^{\opp}}(x,D(C)))\cong \mathsf{Hom}_{\mathsf{K}(\Lambda^{\opp})}(x,D(C)[n]).
    \]
    However, the complex $C\otimes_{\Lambda}D(\Lambda)$ is not contractible (since $C$ itself is not contractible and the functor $-\otimes_{\Lambda}D(\Lambda)\colon \mathsf{K}(\Proj\Lambda)\rightarrow \mathsf{K}(\Inj\Lambda)$ is an equivalence \cite{iyengar krause}) and therefore not acyclic, since it is a bounded below complex of injectives (and such a complex is acyclic if and only if it is contractible). It remains to observe from Lemma~\ref{lem: dual are pure-injective} that $D(C)$ is pure-injective in $\mathsf{K}(\Inj\Lambda^{\opp})$ and thus also in $\mathsf{K}_{\mathsf{ac}}(\Inj\Lambda^{\opp})$. All in all, $D(C)$ is a nonzero object in $\mathcal{T}^-\cap \mathsf{PInj}\mathcal{T}$, contradicting our assumption.
\end{proof}

\begin{proof}[Proof of Theorem \ref{thmA}]
    We know from Lemma \ref{lem: big sing is cg} that $\mathcal{D}\simeq \mathcal{T}^{\mathsf{c}}$, where $\mathcal{T}$ denotes $\mathsf{K}_{\mathsf{ac}}(\Inj\Lambda^{\opp})$. Therefore, it follows from Lemma \ref{lem: bounded above pure} that $Q^{\perp_{\gg}}\cap \mathsf{Inj}\mathcal{D}\subseteq \mathsf{Mod}\mathcal{D}$ is identified with $\mathcal{T}^{-}\cap \mathsf{PInj}\mathcal{T}\subseteq \mathcal{T}$ and the claim follows from Proposition \ref{prop: findim via pure injectives}.
\end{proof}

\begin{cor} \label{singular invariant} (Dey--Šťovíček)
    Let $\Gamma,\Lambda$ be Artin algebras for which there is a triangle equivalence $\mathsf{D}_{\mathsf{sg}}(\Gamma^{\opp})\simeq \mathsf{D}_{\mathsf{sg}}(\Lambda^{\opp})$. Then it holds that $\Findim\Gamma<\infty$ if and only if $\Findim\Lambda<\infty$. 
\end{cor}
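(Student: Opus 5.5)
The plan is to deduce the corollary directly from Theorem~\ref{thmA}, by observing that its criterion depends only on the triangulated category $\mathcal{D}$ together with its suspension.

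Fix a triangle equivalence $\Phi\colon\mathcal{D}_\Gamma=\mathsf{D}_{\mathsf{sg}}(\Gamma^{\opp})\xrightarrow{\sim}\mathcal{D}_\Lambda=\mathsf{D}_{\mathsf{sg}}(\Lambda^{\opp})$. First, precomposition with a quasi-inverse of $\Phi$ gives an equivalence of abelian categories $\Phi_*\colon\Mod\mathcal{D}_\Gamma\xrightarrow{\sim}\Mod\mathcal{D}_\Lambda$, namely $F\mapsto F\circ\Phi^{-1}$. Being an equivalence, $\Phi_*$ carries $\Inj\mathcal{D}_\Gamma$ onto $\Inj\mathcal{D}_\Lambda$. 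It also sends representables to representables: $\Phi_*\mathsf{Hom}(-,x)\cong\mathsf{Hom}(-,\Phi x)$. Since $\Phi$ is essentially surjective, $\Phi_*$ therefore identifies the Yoneda images $Q_\Gamma$ and $Q_\Lambda$ up to isomorphism.

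Next, I would check that $\Phi_*$ commutes with the autoequivalences $\Sigma$ of (\ref{shift of modules}) up to natural isomorphism. This uses that $\Phi$ is a triangle functor, so $\Phi\circ[-1]\cong[-1]\circ\Phi$, and hence $\Phi^{-1}\circ[-1]\cong[-1]\circ\Phi^{-1}$. This is the only point where the triangulated structure enters; it is routine but it is the step one must not skip.

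Consequently, for $F\in\Mod\mathcal{D}_\Gamma$ we obtain
\[
\mathsf{Hom}(H_x,\Sigma^nF)\cong\mathsf{Hom}(\Phi_*H_x,\Sigma^n\Phi_*F)\cong\mathsf{Hom}(H_{\Phi x},\Sigma^n\Phi_*F).
\]
Thus $F\in Q_\Gamma^{\perp_{\gg}}$ if and only if $\Phi_*F\in Q_\Lambda^{\perp_{\gg}}$, and so $\Phi_*$ restricts to an equivalence
\[
Q_\Gamma^{\perp_{\gg}}\cap\Inj\mathcal{D}_\Gamma\simeq Q_\Lambda^{\perp_{\gg}}\cap\Inj\mathcal{D}_\Lambda .
\]
In particular, one side vanishes if and only if the other does. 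Applying Theorem~\ref{thmA} to $\Gamma$ and to $\Lambda$ then gives $\Findim\Gamma<\infty\iff\Findim\Lambda<\infty$. The argument has no real obstacle beyond carefully tracking the compatibility of $\Phi_*$ with $\Sigma$.
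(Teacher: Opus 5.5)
Your proposal is correct and follows essentially the same route as the paper. The paper also extends the triangle equivalence to an equivalence of module categories that commutes with the autoequivalences $\Sigma$, restricts it to $Q_{\Gamma}^{\perp_{\gg}}\cap\Inj\mathcal{D}_{\Gamma}\simeq Q_{\Lambda}^{\perp_{\gg}}\cap\Inj\mathcal{D}_{\Lambda}$, and concludes by Theorem~\ref{thmA}; you just spell out the compatibility checks in more detail.
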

\begin{proof}
    This is a direct consequence of intrinsicness, but let us nonetheless spell out the argument. We write $\mathcal{D}_{\Lambda}$ for $\mathsf{D}_{\mathsf{sg}}(\Lambda^{\opp})$, $\mathcal{D}_{\Gamma}$ for $\mathsf{D}_{\mathsf{sg}}(\Gamma^{\opp})$ and $\mathcal{Q}_{\Lambda}$ and $\mathcal{Q}_{\Gamma}$ for their images along Yoneda, respectively. A triangle equivalence of singularity categories $\mathsf{D}_{\mathsf{sg}}(\Gamma^{\opp})\simeq \mathsf{D}_{\mathsf{sg}}(\Lambda^{\opp})$ extends to an equivalence $\mathsf{Mod}\mathcal{D}_{\Gamma}\simeq \mathsf{Mod}\mathcal{D}_{\Lambda}$ which commutes with the autoequivalences of module categories defined in (\ref{shift of modules}). The above therefore restricts to an equivalence $Q_{\Gamma}^{\perp_{\gg}}\cap \mathsf{Inj}\mathcal{D}_{\Gamma}\simeq Q_{\Lambda}^{\perp_{\gg}}\cap \mathsf{Inj}\mathcal{D}_{\Lambda}$, which completes the proof. 
\end{proof}

\begin{center}
    * \ \ \ \ \  * \ \ \ \ \ *
\end{center}

\begin{reminder}
    For any $\Lambda$-module $x$ we denote by $\Omega(x)$ the kernel of a projective cover $P\rightarrow x$. This gives rise to a functor $\Omega\colon \umod\Lambda\rightarrow \umod\Lambda$, where $\umod\Lambda$ denotes the stable module category of $\Lambda$, and we set inductively $\Omega^n\coloneqq \Omega^{n-1}\circ \Omega$ for $n\geq 2$. The functor  $\smod\Lambda\rightarrow \mathsf{D}_{\mathsf{sg}}(\Lambda)$ induces a functor $\umod\Lambda\rightarrow \mathsf{D}_{\mathsf{sg}}(\Lambda)$ and the following diagram commutes 
    \[\begin{tikzcd}
	{\umod\Lambda} && {\umod\Lambda} \\
	{\mathsf{D}_{\mathsf{sg}}(\Lambda)} && {\mathsf{D}_{\mathsf{sg}}(\Lambda)}
	\arrow["{\Omega^n}", from=1-1, to=1-3]
	\arrow[from=1-1, to=2-1]
	\arrow[from=1-3, to=2-3]
	\arrow["{[-n]}"', from=2-1, to=2-3]
\end{tikzcd}\]
    see \cite[Lemma 2.2]{chen}.
\end{reminder}

\begin{rem} \label{remark}
    For every object $y\in\mathsf{D}_{\mathsf{sg}}(\Lambda^{\opp})$ and any $F\in\mathsf{Mod}\mathcal{D}$, it holds that 
    \begin{align*}
        \mathsf{Hom}_{\mathsf{Mod}\mathcal{D}}(\mathsf{Hom}_{\mathcal{D}}(-,y),\Sigma^nF)&\cong \mathsf{Hom}_{\mathsf{Mod}\mathcal{D}}(\mathsf{Hom}_{\mathcal{D}}((-) [n],y),F) \\ 
        & \cong \mathsf{Hom}_{\mathsf{Mod}\mathcal{D}}(\mathsf{Hom}_{\mathcal{D}}(-,y[-n]),F) \\ 
        & \cong F(y[-n]).
    \end{align*}
    Therefore, $F$ belongs to $Q^{\perp_{\gg}}$ if and only if for every object $y\in\mathsf{D}_{\mathsf{sg}}(\Lambda^{\opp})$ it holds that $F(y[-n])= 0$ for $n\gg 0$ (depending on $y$). Now, since every such $y$ is isomorphic to $x[k]$ for some finitely generated $\Lambda^{\opp}$-module $x$ and some $k\geq 0$ (see for instance \cite[Lemma 2.1]{chen}) and since for every finitely generated $\Lambda^{\opp}$-module $x$ and all $l\geq 0$ it holds that $x\cong \Omega^l(x)[l]$ in the singularity category, we can restate the above in terms of modules; namely, a functor $F$ belongs to $Q^{\perp_{\gg}}$ if and only if for any finitely generated $\Lambda^{\opp}$-module $x$, viewed as an object in the singularity category, it holds that $F(\Omega^n(x))= 0$ for $n\gg 0$ (depending on $x$). Since $F$ is an injective object in $\Mod\mathcal{D}$, it is also a cohomological functor (for instance by using the equivalence (\ref{equivalence})). Therefore, since every finitely generated module admits a composition series (alternatively since the singularity category is the thick closure of the simple modules), the last condition can be tested only on the simple $\Lambda^{\opp}$-modules; namely $F(\Omega^n(\Lambda^{\opp}/\mathsf{rad}\Lambda^{\opp}))=0$ for $n\gg0$.
\end{rem}

\begin{rem} \label{cummings}
    The work of Cummings \cite{cummings} on the symmetry of finite finitistic dimension has two interesting consequences: 
    \begin{itemize}
        \item[(i)] We know by \cite[Theorem 3.5]{cummings} that the implication $\Findim\Lambda<\infty \implies \Findim\Lambda^{\opp}<\infty$ holds for every finite dimensional algebra $\Lambda$ if and only if the FDC holds. In combination with Theorem \ref{thmA}, this tells us that the finiteness of $\Findim\Lambda$ is a property of $\mathsf{D}_{\mathsf{sg}}(\Lambda)$ for every finite dimensional algebra $\Lambda$ if and only if the FDC holds. This also hints that a version of Corollary \ref{singular invariant} that does \emph{not} use opposite algebras is out of reach (except for very specific singular equivalences where it is already known; see for instance \cite[Lemma 4.13]{wang}). 
        \item[(ii)] We further know by \cite[Theorem 3.5]{cummings} that it is enough to test the previous implication for finite dimensional algebras with zero finitistic dimension. Therefore the validity of the FDC depends only on the singularity categories of finite dimensional algebras with zero finitistic dimension; namely the FDC holds if and only if $\mathsf{D}_{\mathsf{sg}}(\Lambda^{\opp})$ satisfies the characterisation of Theorem \ref{thmA} for every finite dimensional algebra $\Lambda$ with $\Findim \Lambda^{\opp}=0$. 
    \end{itemize}
\end{rem}



\subsection{Finitistic dimension along fully faithful functors} 

We use the characterisation of Theorem \ref{thmA} to study the finiteness of the finitistic dimension under fully faithful functors of singularity categories. For this reason, we recall that given a functor $f\colon \mathcal{S}_1\rightarrow\mathcal{S}_2$ of small triangulated categories, there is an induced adjoint triple 
\[
\begin{tikzcd}
\mathsf{Mod}\mathcal{S}_2 \arrow[rr, "f^*"] &  & \mathsf{Mod}\mathcal{S}_1 \arrow[ll, "f^!"', bend right] \arrow[ll, "f_*", bend left]
\end{tikzcd}
\]
between categories of modules. 
\begin{lem} \label{lem: adjunction}
    Assuming the above setup, the following hold. 
    \begin{itemize}
        \item[\textnormal{(i)}] If $f$ is fully faithful, then $f^!$ and $f_*$ are fully faithful. 
        \item[\textnormal{(ii)}] If $f$ admits a right adjoint $g$, then $(f^*,g^*)$ is an adjoint pair. 
    \end{itemize}
\end{lem}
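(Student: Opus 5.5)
Throughout, $f^*\colon \mathsf{Mod}\mathcal{S}_2\to\mathsf{Mod}\mathcal{S}_1$ is restriction $F\mapsto F\circ f$. Its left adjoint $f_*$ is the left Kan extension along $f^{\opp}$, so $f_*(\mathsf{Hom}_{\mathcal{S}_1}(-,s))\cong\mathsf{Hom}_{\mathcal{S}_2}(-,f s)$. Its right adjoint $f^!$ is the right Kan extension, given by $(f^!G)(t)=\mathsf{Hom}_{\mathsf{Mod}\mathcal{S}_1}(f^*\mathsf{Hom}_{\mathcal{S}_2}(-,t),G)$. The plan is to reduce both parts to statements about the unit and counit on representable functors, and then pass to arbitrary modules using that every module is a colimit of representables.

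For (i), the standard fact is that in an adjoint triple $f_*\dashv f^*\dashv f^!$, the functor $f_*$ is fully faithful if and only if the unit $\mathrm{id}\to f^*f_*$ is an isomorphism. The latter holds if and only if the counit $f^*f^!\to\mathrm{id}$ is an isomorphism, which holds if and only if $f^!$ is fully faithful. So it suffices to show that $\eta_F\colon F\to f^*f_*F$ is an isomorphism for every $F\in\mathsf{Mod}\mathcal{S}_1$.

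\emph{Representable case.} For $F=\mathsf{Hom}_{\mathcal{S}_1}(-,s)$ we have
\[
f^*f_*F\cong\mathsf{Hom}_{\mathcal{S}_2}(f(-),fs),
\]
and $\eta_F$ is the map induced by $f$. This is an isomorphism precisely because $f$ is fully faithful.

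\emph{General case.} Both $f^*$ and $f_*$ preserve colimits: $f^*$ is a left adjoint, colimits are computed pointwise, and $f_*$ is a left adjoint as well. Writing an arbitrary $F$ as a colimit (indeed a cokernel of a map between coproducts) of representables, $\eta_F$ is therefore a colimit of isomorphisms, hence an isomorphism. Full faithfulness of $f^!$ then follows from the equivalence of the unit and counit conditions. Alternatively, one checks directly via Yoneda that
\[
(f^*f^!G)(s)\cong\mathsf{Hom}\bigl(f^*f_*\mathsf{Hom}_{\mathcal{S}_1}(-,s),G\bigr)\cong G(s).
\]

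For (ii), let $g\colon\mathcal{S}_2\to\mathcal{S}_1$ be right adjoint to $f$. Then $g^*\colon\mathsf{Mod}\mathcal{S}_1\to\mathsf{Mod}\mathcal{S}_2$ is $G\mapsto G\circ g$, and we must show $f^*\dashv g^*$. The adjunction $f\dashv g$ yields, by precomposition, natural transformations
\[
\eta\colon \mathrm{id}_{\mathcal{S}_1}\to gf,\qquad \varepsilon\colon fg\to\mathrm{id}_{\mathcal{S}_2}.
\]
Since the functors are contravariant in the argument, these give maps $F\to F\circ fg=g^*f^*F$ (via $F(\varepsilon)$) and $f^*g^*G=G\circ gf\to G$ (via $G(\eta)$). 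The triangle identities for these follow directly from those of $(\eta,\varepsilon)$, so $f^*\dashv g^*$.

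A cleaner route to (ii) is to use uniqueness of adjoints and show $f^!\cong g^*$. By Yoneda and the formula for $f^!$,
\[
(f^!G)(t)\cong\mathsf{Hom}\bigl(\mathsf{Hom}_{\mathcal{S}_2}(f(-),t),G\bigr)\cong\mathsf{Hom}\bigl(\mathsf{Hom}_{\mathcal{S}_1}(-,gt),G\bigr)\cong G(gt),
\]
so $f^!\cong g^*$.

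The main obstacle is bookkeeping rather than substance: keeping the variances straight for contravariant functors, and confirming the explicit formula for $f_*$ on representables (the Yoneda/Kan extension identification), which both parts rely on.
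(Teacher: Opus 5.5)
Your proof is correct. The paper gives no argument of its own and only cites the statement as well known; your argument (unit an isomorphism on representables because $f$ is fully faithful, extended to all modules by right exactness, the standard equivalence between the two fully faithful conditions in an adjoint triple, and for (ii) either transporting $f\dashv g$ by precomposition or identifying the right adjoint of $f^*$ with $g^*$ via Yoneda) is exactly the standard proof behind that citation.

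Note that your labels are swapped relative to the paper, whose proof of Theorem~\ref{thmB} shows it uses $f^!\dashv f^*\dashv f_*$. So your identification $f^!\cong g^*$ is the paper's $f_*\simeq g^*$. This is harmless here, since (i) treats both functors symmetrically and (ii) does not involve them.
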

\begin{proof}
    This is well-known, see for instance \cite[Theorem 2.3.3]{sheaves}.
\end{proof}

\begin{proof}[Proof of Theorem \ref{thmB}]
    We denote the given functor by $f$, its right adjoint by $g$ and furthermore we write $\mathcal{D}_{\Lambda}$ for $\mathsf{D}_{\mathsf{sg}}(\Lambda^{\opp})$, $\mathcal{D}_{\Gamma}$ for $\mathsf{D}_{\mathsf{sg}}(\Gamma^{\opp})$ and $Q_{\Lambda}$, $Q_{\Gamma}$ for their images along Yoneda, respectively. We have the following induced adjunctions 
    \[
\begin{tikzcd}
\mathsf{Mod}\mathcal{D}_{\Lambda} \arrow[rr, "f^*\simeq g^!"] \arrow[rr, "g_*"', bend right=60] &  & \mathsf{Mod}\mathcal{D}_{\Gamma} \arrow[ll, "f^!"', bend right] \arrow[ll, "f_*\simeq g^*"', bend left]
\end{tikzcd}
    \]
    where the identifications of functors follow from Lemma \ref{lem: adjunction}(ii). It holds that $g^!(Q_{\Lambda})\subseteq Q_{\Gamma}$ and, as a consequence of the adjunction $(g^!,g^*)$, we have an inclusion $g^*(Q_{\Gamma}^{\perp_{\gg}})\subseteq Q_{\Lambda}^{\perp_{\gg}}$, since $f^*$ and $g^*$ commute with the autoequivalences considered in the module categories. Moreover, $g^*$ maps injective objects to injective objects, being right adjoint to the exact functor $f^*$. By combining the above, it follows that 
    \begin{equation} \label{inclusion}
        g^*(Q_{\Gamma}^{\perp_{\gg}}\cap \Inj\mathcal{D}_{\Gamma})\subseteq Q_{\Lambda}^{\perp_{\gg}}\cap \Inj\mathcal{D}_{\Lambda}.
    \end{equation}
    Let now $x$ be an object in the intersection $Q_{\Gamma}^{\perp_{\gg}}\cap \Inj\mathcal{D}_{\Gamma}$. By using the assumption that $\Findim\Lambda<\infty$, together with Theorem \ref{thmA} and the inclusion (\ref{inclusion}), we see that $g^*(x)\cong 0$. Since $f$ is fully faithful, it follows from Lemma \ref{lem: adjunction}(i) that the functor $f_*\simeq g^*$ is also fully faithful, implying that $g^!g^*\simeq \mathsf{Id}_{\mathsf{Mod}\mathcal{D}_{\Gamma}}$. As a consequence, it holds that $x\cong g^!g^*(x)\cong 0$, thus $\Findim\Gamma<\infty$, again by invoking Theorem \ref{thmA}. 
\end{proof}

Our interest in Theorem \ref{thmB} is structural, but it does yield concrete examples.

\begin{exmp}
    Assume that $\Gamma,\Lambda$ are finite dimensional algebras over a field $k$ and let $f\colon \Gamma\to \Lambda$ be a homomorphism of $k$-algebras satisfying that the cone $\mathsf{cone}(f)$ belongs to $ \mathsf{K}^{\mathsf{b}}(\proj \Gamma^\mathsf{e})$. Under the canonical isomorphism $(\Gamma^{\opp})^{\mathsf{e}}\cong \Gamma^{\mathsf{e}}$, we have $\mathsf{cone}(f^{\opp})\cong \mathsf{cone}(f)$ and therefore $f^{\opp}\colon\Gamma^{\opp}\rightarrow \Lambda^{\opp}$ satisfies the same condition. In particular, $\Lambda^{\opp}$ has finite projective dimension as a left and as a right $\Gamma^{\opp}$-module and so it follows (see for instance \cite[Proposition 3.3]{OPS}) that there is an induced adjunction
    \[
    \begin{tikzcd}
\mathsf{D}_{\mathsf{sg}}(\Lambda^{\opp}) \arrow[rr, "\mathsf{res}"'] &  & \mathsf{D}_{\mathsf{sg}}(\Gamma^{\opp}) \arrow[ll, "\Lambda\otimes^{\mathbb{L}}_{\Gamma}-"', bend right]
\end{tikzcd}
    \]
    of singularity categories. Then, \cite[Proposition 3.7]{OPS} asserts that the left adjoint is fully faithful, hence it follows from Theorem \ref{thmB} that if $\Findim \Lambda<\infty$, then $\Findim\Gamma<\infty$. This applies in particular to `quotient bifinite extensions' in the sense of \cite{bifinite extensions}; compare with \cite[Theorem 1.1]{bifinite extensions}.
\end{exmp}

\end{document}